\newtheorem{theorem}{Theorem}
\newtheorem{corollary}{Corollary}
\newtheorem{lemma}{Lemma}
\newtheorem{proposition}{Proposition}
\theoremstyle{definition}
\newtheorem{example}{Example}
\theoremstyle{definition}\newtheorem{problem}{Problem}
\theoremstyle{remark}\newtheorem{remark}{Remark}
\newcommand{\bmat}{\left[ \begin{matrix}}
	\newcommand{\emat}{\end{matrix} \right]}
\DeclareMathOperator{\trace}{\rm tr}
\newcommand{\E}{{\mathbb E}}
\newcommand{\Rbb}{\mathbb R}
\newcommand{\Jbb}{\mathbb J}
\newcommand{\Cbb}{\mathbb C}
\newcommand{\Tbb}{\mathbb T}
\newcommand{\oneb}{\mathbf 1}
\newcommand{\Cb}{\mathbf C}
\newcommand{\Vb}{\mathbf V}
\newcommand{\Yb}{\mathbf Y}
\newcommand{\Lambdab}{\boldsymbol{\Lambda}}
\newcommand{\zerob}{\boldsymbol{0}}
\DeclareMathOperator{\range}{Range}
\newcommand{\Hfrak}{\mathfrak{H}}
\newcommand{\Cfrak}{\mathfrak{C}}
\newcommand{\Sfrak}{\mathfrak{S}}
\newcommand{\Sscr}{\mathscr{S}}
\newcommand{\Lscr}{\mathscr{L}}
\newcommand{\Cscr}{\mathscr{C}}
\newcommand{\Bcal}{\mathcal{B}}
\def\BibTeX{{\rm B\kern-.05em{\sc i\kern-.025em b}\kern-.08em
    T\kern-.1667em\lower.7ex\hbox{E}\kern-.125emX}}
\begin{document}
\title{On the Well-Posedness of a Parametric Spectral Estimation Problem and Its Numerical Solution}
\author{Bin Zhu
\thanks{Submitted for review April 10, 2018. This work was funded by the China Scholarship Council (CSC) under file no.~201506230140.}
\thanks{B. Zhu is with the Department of Information Engineering, University of Padova, Via Giovanni Gradenigo, 6b, 35131 Padova, PD, Italy (email: \texttt{zhubin@dei.unipd.it}).}
}

\maketitle

\begin{abstract}
This paper concerns a spectral estimation problem in which we want to find a spectral density function that is consistent with estimated second-order statistics. It is an inverse problem admitting multiple solutions, and selection of a solution can be based on prior functions. We show that the problem is well-posed when formulated in a parametric fashion, and that the solution parameter depends continuously on the prior function. In this way, we are able to obtain a smooth parametrization of admissible spectral densities. Based on this result, the problem is reparametrized via a bijective change of variables out of a numerical consideration, and then a continuation method is used to compute the unique solution parameter. Numerical aspects such as convergence of the proposed algorithm and certain computational procedures are addressed. A simple example is provided to show the effectiveness of the algorithm.
\end{abstract}

\begin{IEEEkeywords}
Parametric spectral estimation, generalized moment problem, well-posedness, spectral factorization, numerical continuation method.
\end{IEEEkeywords}

\section{Introduction}\label{sec:intro}

Spectral estimation of a random signal is an important problem in modeling and identification. The current framework of the problem, called ``THREE'', was pioneered in \cite{BGL-THREE-00} in the scalar case and further developed in \cite{Georgiou-L-03,Georgiou-02,georgiou2002spectral}. A brief description of the procedure goes as follows. In order to estimate the unknown spectrum of a stationary process, one first feeds it into a bank of filters and collects the steady output covariance matrix as data. Then the problem is to find a spectral density that is consistent with the covariance data, which naturally admits a formulation of moment-like equations. We shall also call this a \emph{generalized moment problem} as it will be the central object of investigation of this paper.

The idea of formulation as a moment problem can be traced back to the (scalar) \emph{rational covariance extension problem} formulated by Kalman \cite{Kalman}, which aims to find an infinite extension of a finite covariance sequence such that the resulting spectral density, i.e., the Fourier transform of the infinite sequence, is a rational function. The problem was solved first partially by Georgiou \cite{Georgiou-87,Gthesis}, and then completely after a series of works \cite{BLL97,BGL98,BLGM-95,SIGEST-01,byrnes1997partial} by Byrnes, Lindquist, and coworkers. The problem of covariance extension is also closely connected to the analytic interpolation problem of various generality \cite{BGL-01,BLN-03,BTLM06,Georgiou-87-NP,georgiou1999interpolation,takyar2010analytic}. These theories have a wide range of applications in the fields of systems and control, circuit theory, and signal processing (cf.~the afore cited papers and references therein).

Similar to its classical counterpart \cite{KreinNudelman,A65moments}, the generalized moment problem has infinitely many solutions when a solution exists, except for certain degenerate cases. Therefore, such a problem is not well-posed in the sense of Hadamard\footnote{Recall that a problem is well-posed if 1) a solution exists; 2) the solution is unique; 3) the solution depends continuously on the data.}. The mainstream approach today to promote uniqueness of the solution is built on calculus of variations and optimization theory. It has two main ingredients. One is the introduction of a prior spectral density function $\Psi$ as additional data, which represents our ``guess'' of the desired solution $\Phi$. The other is a cost functional $d(\cdot,\cdot)$, which is usually a distance (divergence) between two spectral densities. Then one tries to solve the optimization problem of minimizing $d(\Phi,\Psi)$ subject to the (generalized) moment equation as a constraint. We mention some works that explore different cost functionals, including the Kullback-Leibler divergence \cite{Georgiou-L-03,avventi2011spectral,PavonF-06,FPR-07,FRT-11,baggio2018further} and its matricial extension \cite{Georgiou-06}, the Hellinger distance \cite{FPR-08,RFP-09,RFP-10-well-posedness}, the Itakura-Saito distance \cite{FMP-12,GL-17,enqvist2008minimal}, and some more general divergence families \cite{Z14rat,Z14,Z15}.

The optimization approach guarantees existence and uniqueness of the solution (identifiability) as cited above. However, continuous dependence of the solution on the data does not seem to have attracted much attention, especially in the multivariate case. For the scalar rational covariance extension problem \cite{BLGM-95,BLL97,BGL98,SIGEST-01,byrnes2002identifiability}, such continuity argument is actually part of the results of well-posedness. More precisely, in \cite{BLGM-95} the correspondence between the covariance data and the solution vector has been shown to be a diffeomorphism, i.e., a $C^1$ function with a $C^1$ inverse. Not many results in this respect exist in multivariate formulations. We mention \cite{RFP-10-well-posedness,zhu2017parametric}, where continuous dependence of the solution on the covariance matrix has been shown in the contexts of optimization with the Hellinger distance and a certain parametric formulation, respectively.

The present work can be seen as a continuation of \cite{zhu2017parametric}. As one main contribution, we shall here show that, when restricted to a predefined family of spectral densities, the unique solution parameter to the spectral estimation problem depends also continuously on the prior function under a suitable metric topology. The idea is to study the so-called moment map directly in a parametric form. Due to the regularity of the moment map, we can view the solution parameter as an implicit functional of the prior, and then invoke the Banach space version of the implicit function theorem to prove continuity. Based on this continuity result, the rest of the paper is devoted to a numerical solver to compute the solution (parameter) using a continuation method which is a quite standard tool from nonlinear analysis. The idea is to solve a family of moment equations parametrized by one real variable, and to trace the solution curve from a known starting point. An specialized algorithm called ``predictor-corrector'' (see \cite{allgower1990continuation}) is adapted for the current problem and a conservative bound on the step length of the algorithm is given to ensure convergence. The proof, inspired by \cite{enqvist2001homotopy}, is built upon the Kantorovich theorem for the convergence of Newton iterations to solve nonlinear equations. Moreover, we do computation in the domain of spectral factors due to the improvement of conditioning, especially when the solution lies near the boundary of the feasible set.


The paper is organized as follows. Problem formulation is given in Section \ref{sec:problem} where a parametric family of candidate solutions is also  introduced. In Section
\ref{sec:well-posed}, we first give a continuity argument with respect to the prior function, thus extending existing results on the well-posedness. Then we reformulate the problem in terms of the spectral factor of a certain rational spectral density without affecting the well-posedness. Section \ref{sec:numeric} contains a numerical procedure to compute the solution in the domain of the spectral factor using a continuation method. Convergence of the proposed algorithm is investigated in detail. Moreover, a key computational step concerning the inverse Jacobian is elaborated, and a numerical example is provided for illustration.

\subsection*{Notations}

Some notations are common as $\E$ denotes mathematical expectation, $\Cbb$ the complex plane, and $\Tbb$ the unit circle $\{\,z:|z|=1\,\}$. 

Sets: The symbol $\Hfrak_{n}$ represents the vector space of $n\times n$ Hermitian matrices, and $\Hfrak_{+,n}$ is the subset that contains positive definite matrices. The space of $\Hfrak_{m}$-valued continuous functions on $\Tbb$ is denoted with $C(\Tbb;\Hfrak_m)$. The set $C_+(\Tbb)$ consists of continuous functions on $\Tbb$ that take real and positive values, which is an open subset (under the metric topology) of $C(\Tbb)\equiv C(\Tbb;\Hfrak_1)$. The symbol $\Sfrak_m$ denotes the family of $\Hfrak_{+,m}$-valued functions defined on $\Tbb$ that are bounded and coercive. 

Linear algebra: The notation {$(\cdot)^{*}$ means taking complex conjugate transpose when applied to a matrix and $(\cdot)^{-*}$ is a shorthand for $[(\cdot)^{-1}]^{*}$. When considering a rational matrix-valued function with a state-space realization $G(z)=C(zI-A)^{-1}B+D$,  $G^{*}(z):=B^*(z^{-1}I-A^*)^{-1}C^*+D^*$.
Matrix inner product is defined as
$\langle A,B\rangle:=\trace(AB^*)$ for $A,B\in\Cbb^{m\times n}$, and $\|A\|_F:=\sqrt{\langle A,A\rangle}$ is the Frobenius norm.
The Euclidean $2$-norm of $x\in\Cbb^n$ is $\|x\|_2:=\sqrt{x^* x}$. The subscript $_2$ is usually omitted and we simply write $\|\cdot\|$.
When applied to a matrix $A\in\Cbb^{m\times n}$ or more generally a multilinear function, $\|A\|$ means the induced $2$-norm.

\section{Problem formulation}
\label{sec:problem}

Let us describe here the procedure to estimate the unknown spectral density $\Phi(z)$ of a zero-mean wide-sense stationary discrete-time $\Cbb^m$-valued process $y(t)$ in more details as proposed in \cite{Georgiou-L-03}. First we feed the process into a linear filter with a state-space representation
\begin{equation}\label{filter_bank}
x(t+1)=Ax(t)+By(t),
\end{equation}
whose transfer function is simply
\begin{equation}\label{trans_func}
G(z)=(zI-A)^{-1}B.
\end{equation}
There are some extra conditions on the system matrices. More precisely, $A\in\Cbb^{n\times n}$ is Schur stable, i.e., has all its eigenvalues strictly inside the unit circle and $B\in\Cbb^{n\times m}$ is of full column rank ($n\geq m$). Moreover, the pair $(A,B)$ is required to be \emph{reachable}.

Next an estimate of the steady-state covariance matrix $\Sigma:=\E\{x(t)x(t)^*\}$ of the state vector $x(t)$ is computed. Such structured covariance estimation problem has been discussed in the literature (see \cite{Zorzi-F-12, FPZ-12,ning2013geometry}). Here we shall assume that the matrix $\Sigma>0$ is given and we have
\begin{equation}\label{mmt_constraint}
\int G\Phi G^*=\Sigma.
\end{equation}
The integration is carried out on the unit circle $\Tbb$ with respect to the normalized Lebesgue measure $\frac{d\theta}{2\pi}$. This simplified notation will be adopted throughout the paper.

In general, an estimated covariance matrix may not be compatible with the filter structure (\ref{trans_func}). In other words, viewing (\ref{mmt_constraint}) as a constraint on the input spectrum, there may not exist a \emph{feasible} $\Phi$. In this paper we shall always assume such feasibility. Specifically, let us define the linear operator
\begin{equation}\label{map_Gamma}
\begin{split}
\Gamma\colon C(\Tbb;\Hfrak_m) & \to\Hfrak_n \\
 \Phi & \mapsto\int G\Phi G^*.
\end{split}
\end{equation}
Then we assume that the covariance matrix $\Sigma\in\range\Gamma$.
Equivalent conditions are elaborated in \cite{Georgiou-02,georgiou2002spectral} (see also \cite{FPZ-10,FPZ-12,Zorzi-F-12,FPR-07,FPR-08,RFP-09,RFP-10-well-posedness,FMP-12}).

Our problem now is to find a spectral density $\Phi$ that satisfies the generalized moment constraint (\ref{mmt_constraint}). It can been seen as a generalization of the classical \emph{covariance extension problem} \cite{Grenander_Szego}, in which we are given $p+1$ covariance matrices (with a slight abuse of notation) $\Sigma_k:=\E\{y(t+k)y(t)^*\}\in\Cbb^{m\times m},\ k=0,1,\dots,p$ such that the block-Toeplitz matrix
\begin{equation}
\Sigma=\bmat \Sigma_0&\Sigma_1^*&\Sigma_2^*&\cdots&\Sigma_p^* \\
\Sigma_1&\Sigma_0&\Sigma_1^*&\cdots&\Sigma_{p-1}^* \\
\Sigma_2&\Sigma_1&\Sigma_0&\cdots&\Sigma_{p-2}^* \\
\vdots&\vdots&\ddots&\ddots&\vdots \\
\Sigma_p&\Sigma_{p-1}&\cdots&\Sigma_1&\Sigma_0\emat
\end{equation}
is positive definite, and we want to find a spectral density $\Phi$ satisfying the set of \emph{moment equations}
\begin{equation}\label{mmt_eqns}
\int z^k\Phi=\Sigma_k,\quad k=0,1,\dots,p.
\end{equation}
It is easy to verify that these moment equations are equivalent to \eqref{mmt_constraint} with a choice of the matrix pair
\begin{equation}\label{A_B_cov_ext}
A=\bmat 0&I_m&0&\cdots&0 \\ 
0&0&I_m&\cdots&0 \\
\vdots&\vdots& &\ddots&\vdots \\
0&0&0&\cdots&I_m \\
0&0&0&\cdots&0\emat, \quad
B=\bmat 0 \\ 0 \\ \vdots \\ 0 \\ I_m\emat,
\end{equation}
and the transfer function
\begin{equation}\label{trans_func_covext}
G(z)=(zI-A)^{-1}B=\bmat z^{-p-1}I_m \\ z^{-p}I_m \\ \vdots \\z^{-1}I_m \emat.
\end{equation}
Here each block in the matrices is of $m\times m$ and $n=m(p+1)$. 

As mentioned in the Introduction and well studied in the literature, the inverse problem of finding $\Phi$ such that (\ref{mmt_constraint}) holds is typically not well-posed when feasible, because there are infinitely many solutions. One way to remedy this is to restrict the candidate solution to some particular family of spectral densities, as we shall proceed below.

Let us define the set of parameters
\begin{equation}
\Lscr_+:=\{\Lambda\in\Hfrak_{n}\;:\;G^*(z)\Lambda G(z)>0,\ \forall z\in\Tbb\}.
\end{equation}
By the continuous dependence of eigenvalues on the matrix entries, one can verify that $\Lscr_+$ is an open subset of $\Hfrak_{n}$. To avoid any redundancy in the parametrization, we have to define the set $\Lscr_+^\Gamma:=\Lscr_+\cap\range\Gamma$. This is due to a simple geometric result. More precisely, the adjoint operator of $\Gamma$ in (\ref{map_Gamma}) is given by (cf. \cite{FPZ-10})
\begin{equation}\label{Gamma_star}
\begin{split}
\Gamma^*:\Hfrak_n & \to C(\Tbb;\Hfrak_m) \\
X & \mapsto G^*XG,
\end{split}
\end{equation}
and we have the relation
\begin{equation}\label{Range_Gamma_ortho}
\begin{split}
\left(\range\Gamma\right)^\perp & =\ker\Gamma^* \\
 & =\left\{X\in\Hfrak_n\,:\,G^*(z)XG(z)=0,\ \forall z\in\Tbb\right\}.
\end{split}
\end{equation}
Hence for any $\Lambda\in\Lscr_+$, we have the orthogonal decomposition
\[\Lambda=\Lambda^\Gamma+\Lambda^\perp\]
with $\Lambda^\Gamma\in\range\Gamma$ and $\Lambda^\perp$ in the orthogonal complement. In view of (\ref{Range_Gamma_ortho}), the part $\Lambda^\perp$ does not contribute to the function value of $G^*\Lambda G$ on the unit circle, and we simply have
\[\Lscr_+^\Gamma=\Pi_{\range\Gamma}\Lscr_+,\]
where $\Pi_{\range\Gamma}$ denotes the orthogonal projection operator onto the linear space $\range\Gamma$.

Define next a family of spectral densities
\begin{equation}\label{family_spectra}
\Sscr:=\left\lbrace\,\Phi(\psi,\Lambda)=\psi(G^*\Lambda G)^{-1}\,:\,\psi\in C_+(\Tbb),\ \Lambda\in\Lscr_+^\Gamma\,\right\rbrace.
\end{equation}
Our problem is formulated as follows.

\begin{problem}\label{spec_estimation}
	Given the filter bank $G(z)$ in (\ref{trans_func}) and the matrix $\Sigma\in\range_+\Gamma:=\range\Gamma\,\cap\Hfrak_{+,n}$, find all the spectral densities $\Phi$ in the family $\Sscr$ such that (\ref{mmt_constraint}) holds.
\end{problem}

The motivation for choosing such a family $\Sscr$ lies in the observation that for a fixed $\psi$, the solution of the optimization problem
\begin{equation}
\underset{\Phi\in\Sfrak_m}{\text{maximize}}\ \int\psi\log\det\Phi\quad \text{subject to } (\ref{mmt_constraint})
\end{equation}
has exactly that form, where the matrix $\Lambda$ appears in the dual problem
\begin{equation}\label{J_dual}
\underset{\Lambda\in\Lscr_+^\Gamma}{\text{minimize}}\quad\Jbb_\psi(\Lambda)=\langle\Lambda,\Sigma\rangle-\int\psi\log\det(G^*\Lambda G).
\end{equation}
The scalar function $\psi$ encodes some \emph{a priori} information\footnote{Since $\Phi$ is a matrix-valued spectral density, the prior should be understood as $\psi I_m$.} that we have on the solution density $\Phi$. This optimization problem has been well studied in \cite{avventi2011spectral}, which can be seen as a multivariate generalization of the scalar problem investigated in \cite{Georgiou-L-03}. An important point in the optimization approach is that the optimal dual variable $\Lambda$ does not lie on the boundary of the feasible set. Hence it is a stationarity point of the function $\Jbb_\psi$ and the stationarity condition guarantees the moment constraint \eqref{mmt_constraint}. We shall next approach Problem \ref{spec_estimation} in a different way. Essentially, we want to treat the stationarity equation $\nabla \Jbb_\psi(\Lambda)=0$ directly, and to solve it via successive approximation of the prior $\psi$. We will show that the desired solution parameter can be achieved by solving an ordinary differential equation given the initial condition. To this end, we need first to establish a further result on the well-posedness of Problem \ref{spec_estimation}.

\section{Further result of well-posedness}\label{sec:well-posed}

Consider the map
\begin{equation}\label{mmt_map}
\begin{split}
f:\,D:=C_+(\Tbb)\times \Lscr_+^\Gamma & \to \range_+\Gamma \\
(\psi,\Lambda) & \mapsto\int G\psi(G^*\Lambda G)^{-1}G^*.
\end{split}
\end{equation}
Given $\Sigma\in\range_+\Gamma$, we aim to solve the equation
\begin{equation}\label{func_eqn}
f(\psi,\Lambda)=\Sigma,
\end{equation}
which is in fact equivalent to the stationarity condition $\nabla\Jbb_\psi(\Lambda)=0$ of the function in (\ref{J_dual}) when $\psi$ is fixed.

The map $f$ has a nice property. As shown in \cite{zhu2017parametric}, for a fixed $\psi\in C_+(\Tbb)$, the section of the map
\begin{equation}\label{omega_map}
\omega(\,\cdot\,):=f(\psi,\,\cdot\,):\,\Lscr_+^\Gamma\to\range_+\Gamma
\end{equation}
is a diffeomorphism\footnote{The word ``diffeomorphism'' in this paper should always be understood in the $C^1$ sense.}. This means that the map above is (at least) of class $C^1$, and its Jacobian, which contains all the partial derivatives of $f$ w.r.t. its second argument, vanishes nowhere in the set $\Lscr_+^\Gamma$. This implies that the solution map
\begin{equation*}
s:\,(\psi,\Sigma)\mapsto\Lambda
\end{equation*}
is well defined, that is, for any fixed $\psi$, there exists a unique $\Lambda$ such that \eqref{func_eqn} holds. Moreover, the map $s(\psi,\,\cdot\,):\,\range_+\Gamma\to\Lscr_+^\Gamma$ is continuous. We shall next show the well-posedness in the other respect, namely continuity of the map
\begin{equation}\label{imp_func_of_psi}
s(\,\cdot\,,\Sigma):\,C_+(\Tbb)\to\Lscr_+^\Gamma
\end{equation} 
when $\Sigma$ is held fixed. Note that continuity here is to be understood in the metric space setting. Clearly, it is equivalent to consider solving the functional equation (\ref{func_eqn}) for $\Lambda$ in terms of $\psi$ when its right-hand side is fixed, which naturally falls into the scope of the implicit function theorem.

\subsection{Proof of continuity with respect to the prior function}\label{subsec:cont_prior}

We first show that our moment map $f$ in \eqref{mmt_map} is of class $C^1$ on its domain $D$. According to \cite[Proposition 3.5, p.~10]{lang1999fundamentals}, it is equivalent to show that the two partial derivatives of $f$ exist and are continuous in $D$. More precisely, the partials evaluated at a point are understood as linear operators between two underlying vector spaces
\begin{equation}
\begin{split}
f'_1: & \,D\to L(C(\Tbb),\range\Gamma), \\
f'_2: & \,D\to L(\range\Gamma,\range\Gamma).
\end{split}
\end{equation}
The symbol $L(X,Y)$ denotes the vector space of continuous linear operators between two Banach spaces $X$ and $Y$, which is itself a Banach space. It is then easy to verify the following.
\begin{equation}\label{f_1partial}
\begin{split}
f'_1(\psi,\Lambda):\,C(\Tbb) & \to\range\Gamma \\
\delta\psi & \mapsto\int G\delta\psi(G^*\Lambda G)^{-1}G^*
\end{split}
\end{equation}
Clearly, the above operator does not depend on $\psi$ due to linearity. We also have $f'_2(\psi,\Lambda):\,\range\Gamma \to\range\Gamma$
\begin{equation}\label{f_2partial}
\delta\Lambda \mapsto -\int G\psi(G^*\Lambda G)^{-1}(G^*\delta\Lambda G)(G^*\Lambda G)^{-1}G^*.
\end{equation}

We need some lemmas. Notice that convergence of a sequence of continuous functions on a fixed interval $[a,b]\subset\Rbb$ will always be understood in the max-norm
\begin{equation}\label{max-norm}
\|f\|:=\max_{t\in[a,b]}|f(t)|.
\end{equation}
For $m\times n$ matrix valued continuous functions in one variable, define the norm as
\begin{equation}\label{max-Frob-norm}
\|M\|:=\max_{t\in[a,b]}\|M(t)\|_F
\end{equation}
It is easy to verify that convergence in the norm (\ref{max-Frob-norm}) is equivalent to element-wise convergence in the max-norm (\ref{max-norm}). 

\begin{lemma}
For a $n\times p$ matrix continuous function $M(\theta)$ on $[-\pi,\pi]$, the inequality holds for the Frobenius norm
\begin{equation}\label{inequ_int_norm}
\left\|\int M(\theta)\right\|_F\leq\sqrt{np}\int\|M(\theta)\|_F.
\end{equation}
\end{lemma}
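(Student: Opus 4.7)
The plan is to reduce the matrix-valued inequality to a scalar one by working entry-by-entry. Let $m_{ij}(\theta)$ denote the $(i,j)$ entry of $M(\theta)$, so that integration of $M$ is componentwise and $\int M$ is the $n\times p$ matrix with entries $\int m_{ij}$. Then by definition of the Frobenius norm,
\[
\left\|\int M(\theta)\right\|_F^{2}=\sum_{i=1}^{n}\sum_{j=1}^{p}\left|\int m_{ij}(\theta)\right|^{2}.
\]

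Next, I would bound each scalar integral by the triangle inequality for complex integrals, $\left|\int m_{ij}(\theta)\right|\leq\int|m_{ij}(\theta)|$, and then use the elementary fact that every entry of a matrix is dominated (in modulus) by its Frobenius norm, $|m_{ij}(\theta)|\leq\|M(\theta)\|_{F}$ for each $\theta$. Combining these two bounds gives, for every pair $(i,j)$,
\[
\left|\int m_{ij}(\theta)\right|\leq\int\|M(\theta)\|_{F}.
\]

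Plugging this into the first display and summing the $np$ identical terms yields
\[
\left\|\int M(\theta)\right\|_F^{2}\leq np\left(\int\|M(\theta)\|_{F}\right)^{2},
\]
and the desired inequality (\ref{inequ_int_norm}) follows by taking square roots.

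There is no real obstacle here: the estimate is deliberately loose (a convexity/Jensen argument would in fact give the sharper constant $1$), but the form with $\sqrt{np}$ is presumably what is needed later when one wants to convert a bound on $\|M\|_F$ (entry-wise or uniform on $\Tbb$) into a bound on the Frobenius norm of its integral. The only point to watch is that all integrals are taken with respect to the normalized measure $\frac{d\theta}{2\pi}$ adopted in the paper, which plays no role in the proof because the argument is purely pointwise and uses only the triangle inequality of the scalar integral.
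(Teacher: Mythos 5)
Your argument is correct and is essentially the same as the paper's: both expand $\bigl\|\int M\bigr\|_F^2$ entrywise, apply the scalar triangle inequality $\bigl|\int m_{ij}\bigr|\leq\int|m_{ij}|$, dominate $|m_{ij}(\theta)|$ by $\|M(\theta)\|_F$, and then count $np$ terms (the paper pulls out a $\max_{j,k}$ first, you bound each term uniformly, which is the same thing). Your side remark that the constant can be sharpened to $1$ via the integral triangle inequality for the Frobenius norm is accurate but not pursued by the paper.
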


\begin{proof}
Let $m_{jk}(\theta)$ be the $(j,k)$ element of $M(\theta)$. Then we have
\begin{equation}
\begin{split}
\left\|\int M(\theta)\right\|_F^2 & =\sum_{j,k} \left|\int m_{jk}(\theta)\right|^2 \leq np \max_{j,k} \left|\int m_{jk}(\theta)\right|^2 \\
 & \leq np \max_{j,k} \left(\int |m_{jk}(\theta)|\right)^2 \\
 & \leq np \left(\int\|M(\theta)\|_F\right)^2
\end{split}
\end{equation}
where the third inequality holds because $|m_{jk}(\theta)|\leq \|M(\theta)\|_F$ for any $j,k$.
\end{proof}

\begin{lemma}\label{lem_uniform_converg}
If a sequence $\{\Lambda_k\}\subset\Lscr_+^\Gamma$ converges to $\Lambda\in\Lscr_+^\Gamma$, then the sequence of functions $\{(G^*\Lambda_kG)^{-1}\}$ converges to $(G^*\Lambda G)^{-1}$ in the norm $(\ref{max-Frob-norm})$.
\end{lemma}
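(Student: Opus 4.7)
The plan is to reduce the statement to a uniform convergence argument on the compact set $\Tbb$ and then exploit the matrix identity $A^{-1}-B^{-1}=A^{-1}(B-A)B^{-1}$ to transfer convergence from $G^*\Lambda_k G$ to its inverse. The key observation is that $G$ is continuous on $\Tbb$ and hence $\|G\|_F$ attains a finite maximum $M_G$ on the unit circle. Since the map $\Lambda \mapsto G^*\Lambda G$ is linear in $\Lambda$, the bound $\|G^*(\Lambda_k-\Lambda)G\|_F \leq M_G^2\,\|\Lambda_k-\Lambda\|_F$ holds pointwise in $\theta$, so that $G^*\Lambda_k G$ converges to $G^*\Lambda G$ in the norm \eqref{max-Frob-norm}.

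Next I would establish a uniform lower bound on the smallest eigenvalue of $G^*\Lambda G$ over $\Tbb$. Because $\Lambda \in \Lscr_+^\Gamma$, we have $G^*(z)\Lambda G(z)>0$ for every $z\in\Tbb$; since the smallest eigenvalue depends continuously on $\theta$ and $\Tbb$ is compact, there exists $\mu>0$ with $G^*(z)\Lambda G(z)\succeq \mu I_m$ uniformly on $\Tbb$. By the uniform convergence just established and the continuity of eigenvalues in the matrix entries, for all sufficiently large $k$ we have $G^*(z)\Lambda_k G(z)\succeq (\mu/2) I_m$ uniformly on $\Tbb$, hence both $(G^*\Lambda_k G)^{-1}$ and $(G^*\Lambda G)^{-1}$ are well defined and uniformly bounded in the Frobenius norm by $2\sqrt{m}/\mu$.

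To conclude, apply the identity
\begin{equation*}
(G^*\Lambda_k G)^{-1}-(G^*\Lambda G)^{-1}=(G^*\Lambda_k G)^{-1}\bigl(G^*\Lambda G-G^*\Lambda_k G\bigr)(G^*\Lambda G)^{-1}
\end{equation*}
pointwise in $\theta$. Taking Frobenius norms, using submultiplicativity together with the uniform bound from the previous step and the uniform convergence $\|G^*\Lambda_k G-G^*\Lambda G\|\to 0$, yields
\begin{equation*}
\bigl\|(G^*\Lambda_k G)^{-1}-(G^*\Lambda G)^{-1}\bigr\| \;\leq\; \frac{4m}{\mu^2}\,M_G^2\,\|\Lambda_k-\Lambda\|_F \;\longrightarrow\; 0,
\end{equation*}
which is the desired convergence in the norm \eqref{max-Frob-norm}.

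The main technical point is securing the uniform positive-definiteness of $G^*\Lambda_k G$ for large $k$; everything else is a routine compactness-and-continuity argument. This is not genuinely hard but it is what makes the inversion step legitimate uniformly in $\theta$, and it is the place where the openness of $\Lscr_+$ (noted right after its definition in the excerpt) is implicitly used.
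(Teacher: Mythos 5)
Your proof is correct and follows essentially the same route as the paper: the same matrix inversion identity, the same submultiplicativity argument, and the same uniform lower bound on the eigenvalues of $G^*\Lambda_kG$. The only difference is cosmetic: the paper cites \cite[Lemma 10]{zhu2017parametric} for the uniform bound $G^*\Lambda_kG\geq\mu I$ on all of $\Tbb$ for every $k$, whereas you derive a bound $(\mu/2)I$ valid for $k$ large directly from the compactness of $\Tbb$ and the Lipschitz continuity of the smallest eigenvalue under uniform convergence, which suffices equally well for the limit.
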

\begin{proof}
From \cite[Lemma 10]{zhu2017parametric}, there exists $\mu>0$ such that for any $k$ and $\theta\in[-\pi,\pi]$, $G^*\Lambda_kG\geq\mu I$. Hence we have
\begin{equation}
\begin{split}
& \|(G^*\Lambda_kG)^{-1}-(G^*\Lambda G)^{-1}\|_F \\
= & \|(G^*\Lambda_kG)^{-1}G^*(\Lambda-\Lambda_k)G(G^*\Lambda G)^{-1}\|_F \\
 \leq & \,\kappa^2\mu^{-2}G_{\max}^2\|\Lambda_k-\Lambda\|_F\to0,
\end{split}
\end{equation}
where $\kappa$ here and in the sequel is a constant of norm equivalence $\|\cdot\|_F\leq\kappa\|\cdot\|_2$, the constant $G_{\max}:=\max_{\theta\in[-\pi,\pi]}\|G(e^{i\theta})\|_F$, and we have used submultiplicativity of the Frobenius norm.
\end{proof}

\begin{proposition}\label{prop_f_C1}
The map $f$ in $(\ref{mmt_map})$ is of class $C^1$.
\end{proposition}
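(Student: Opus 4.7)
The plan is to invoke the criterion just cited from \cite[Proposition 3.5, p.~10]{lang1999fundamentals}: to show $f\in C^1(D)$ it is enough to establish that the candidates \eqref{f_1partial} and \eqref{f_2partial} are indeed the Fr\'echet partial derivatives of $f$, and that as maps $D\to L(C(\Tbb),\range\Gamma)$ and $D\to L(\range\Gamma,\range\Gamma)$ they are continuous. So I would organize the argument in two halves, existence of the partials and continuity of the partials.

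For existence, the first partial is immediate since $f$ is linear in $\psi$: the increment $f(\psi+\delta\psi,\Lambda)-f(\psi,\Lambda)$ equals $\int G\,\delta\psi\,(G^*\Lambda G)^{-1}G^*$ with zero remainder, so \eqref{f_1partial} is $f'_1(\psi,\Lambda)$ by definition. For the second partial I would start from the algebraic identity
\[
(M+\Delta)^{-1}=M^{-1}-M^{-1}\Delta M^{-1}+M^{-1}\Delta(M+\Delta)^{-1}\Delta M^{-1},
\]
specialize it to $M=G^*\Lambda G$ and $\Delta=G^*\delta\Lambda G$, multiply by $G\psi$ on the left and $G^*$ on the right, and integrate. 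The linear term reproduces \eqref{f_2partial}. The quadratic remainder is then bounded pointwise on $\Tbb$ using the uniform coercivity bound $G^*(\Lambda+\delta\Lambda)G\ge\mu I$ from \cite[Lemma 10]{zhu2017parametric} (valid on a neighborhood of $\Lambda$), submultiplicativity of $\|\cdot\|_F$, and the uniform bound $G_{\max}$ on $\|G\|_F$; the integral inequality \eqref{inequ_int_norm} then passes the bound through the integral, yielding a remainder of order $\|\delta\Lambda\|_F^2$, hence $o(\|\delta\Lambda\|_F)$ as required.

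For continuity, take $(\psi_k,\Lambda_k)\to(\psi,\Lambda)$ in $D$. Since $f'_1$ does not depend on $\psi$, its continuity reduces to estimating the operator norm of $\delta\psi\mapsto\int G\,\delta\psi\,\bigl[(G^*\Lambda_k G)^{-1}-(G^*\Lambda G)^{-1}\bigr]G^*$. Bounding the integrand pointwise in $\|\cdot\|_F$ and applying \eqref{inequ_int_norm} together with Lemma \ref{lem_uniform_converg} makes the corresponding operator norm tend to zero. For $f'_2$ I would split the difference telescopically,
\[
\psi_k A_k B A_k-\psi A B A=(\psi_k-\psi)A_k B A_k+\psi(A_k-A)B A_k+\psi A B (A_k-A),
\]
where $A_k:=(G^*\Lambda_k G)^{-1}$, $A:=(G^*\Lambda G)^{-1}$ and $B:=G^*\delta\Lambda G$. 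Each summand is controlled in the max-Frobenius norm \eqref{max-Frob-norm} via uniform convergence of $\psi_k\to\psi$ on $\Tbb$, Lemma \ref{lem_uniform_converg}, and the uniform boundedness of $\|A_k\|_F$ that comes once more from the coercivity bound; another application of \eqref{inequ_int_norm} converts this into operator-norm convergence $f'_2(\psi_k,\Lambda_k)\to f'_2(\psi,\Lambda)$.

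The main obstacle I expect is not conceptual but rather bookkeeping in the second partial: the operator $f'_2(\psi,\Lambda)$ depends on three $\Lambda$-dependent factors together with the function $\psi$, so the triangle-inequality split must be arranged so that every remaining factor stays uniformly bounded on a neighborhood of $(\psi,\Lambda)$. This is precisely where the uniform lower bound on $G^*\Lambda G$ from \cite[Lemma 10]{zhu2017parametric} is indispensable; once that is in place, everything reduces to the uniform convergence statement of Lemma \ref{lem_uniform_converg} combined with the scalar bound \eqref{inequ_int_norm}.
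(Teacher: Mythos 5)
Your proof is correct and follows essentially the same route as the paper: both verify $C^1$ via the Lang criterion by establishing continuity of the two partial derivatives, using Lemma~\ref{lem_uniform_converg}, the integral inequality~\eqref{inequ_int_norm}, the uniform coercivity bound from \cite[Lemma 10]{zhu2017parametric}, and a telescoping (triangle-inequality) split for $f'_2$. Your explicit verification that \eqref{f_1partial} and \eqref{f_2partial} really are the Fr\'echet partial derivatives, via the second-order resolvent identity, is a welcome amplification of what the paper dismisses as ``easy to verify,'' and your three-term split for $f'_2$ is a cosmetic variant of the paper's two-term decomposition through $\delta\Phi$.
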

\begin{proof}
We show that the two partial derivatives of $f$ are continuous in its domain.
Consider the partial w.r.t. the first argument \eqref{f_1partial}. Let the sequence $\{(\psi_k,\Lambda_k)\}\subset D$ converge in the product topology to $(\psi,\Lambda)\in D$, that is, $\psi_k\to\psi$ in the max-norm and $\Lambda_k\to\Lambda$ in any matrix norm. We need to show that
\[f'_1(\psi_k,\Lambda_k)\to f'_1(\psi,\Lambda).\]
in the operator norm. Indeed, we have
\begin{equation}
\begin{split}
 & \|f'_1(\psi_k,\Lambda_k)-f'_1(\psi,\Lambda)\| \\ = & \sup_{\|\delta\psi\|=1}\left\|\int G\delta\psi\left[\,(G^*\Lambda_kG)^{-1}-(G^*\Lambda G)^{-1}\,\right]G^*\right\|_F \\
\leq & \, nG_{\max}^2\left\|(G^*\Lambda_kG)^{-1}-(G^*\Lambda G)^{-1}\right\|\to0.
\end{split}
\end{equation}
where we have used the inequality (\ref{inequ_int_norm}) and Lemma \ref{lem_uniform_converg}.

For the partial derivative of $f$ w.r.t. the second argument \eqref{f_2partial}, let us set $\delta\Phi(\psi,\Lambda;\delta\Lambda)=\Phi(\psi,\Lambda)(G^*\delta\Lambda G)(G^*\Lambda G)^{-1}$ to ease the notation.
Through similar computation, we arrive at
\begin{equation}
\begin{split}
 & \|f'_2(\psi_k,\Lambda_k)-f'_2(\psi,\Lambda)\| \\ 
\leq & \,\sup_{\|\delta\Lambda\|=1} nG_{\max}^2\left\|\delta\Phi(\psi_k,\Lambda_k;\delta\Lambda)-\delta\Phi(\psi,\Lambda;\delta\Lambda)\right\|\to0.
\end{split}
\end{equation}
The limit tends to $0$ because the part
\begin{equation}
\begin{split}
 & \sup_{\|\delta\Lambda\|=1} \left\|\delta\Phi(\psi_k,\Lambda_k;\delta\Lambda)-\delta\Phi(\psi,\Lambda;\delta\Lambda)\right\| \\
= & \max_{\substack{\|\delta\Lambda\|=1, \\ \theta\in[-\pi,\pi]}} \left\|\delta\Phi(\psi_k,\Lambda_k;\delta\Lambda)-\delta\Phi(\psi,\Lambda;\delta\Lambda)\right\|_F \\
= & \max_{\substack{\|\delta\Lambda\|=1, \\ \theta\in[-\pi,\pi]}} \left\|\delta\Phi(\psi_k,\Lambda_k;\delta\Lambda)-\Phi(\psi_k,\Lambda_k)(G^*\delta\Lambda G)(G^*\Lambda G)^{-1}\right. \\
& \qquad \left.+\Phi(\psi_k,\Lambda_k)(G^*\delta\Lambda G)(G^*\Lambda G)^{-1}-\delta\Phi(\psi,\Lambda;\delta\Lambda)\right\|_F \\
\leq & \max_{\substack{\|\delta\Lambda\|=1, \\ \theta\in[-\pi,\pi]}} \left(\|\Phi(\psi_k,\Lambda_k)\|_F\|(G^*\Lambda_k G)^{-1}-(G^*\Lambda G)^{-1}\|_F\right. \\
& \quad \left.+\|\Phi(\psi_k,\Lambda_k)-\Phi(\psi,
 \Lambda)\|_F\|(G^*\Lambda G)^{-1}\|_F\right)\|G^*\delta\Lambda G\|_F \\
\leq & \,\kappa\mu^{-1}G_{\max}^2\left(K_\psi\|(G^*\Lambda_k G)^{-1}-(G^*\Lambda G)^{-1}\|\right. \\
& \qquad \qquad \qquad \qquad \qquad \qquad \left.+\|\Phi(\psi_k,\Lambda_k)-\Phi(\psi,\Lambda)\|\right).
\end{split}
\end{equation}
Note that $\|\psi_k\|\leq K_{\psi}$ for some $K_\psi>0$ uniformly in $k$ because $\psi_k\to\psi$. Also, $\Phi(\psi_k,\Lambda_k)\to\Phi(\psi,\Lambda)$ is a simple consequence of Lemma \ref{lem_uniform_converg} and the fact
\[f_kg_k\to fg\text{ if }f_k\to f,\ g_k\to g.\]

Finally, the claim of the proposition follows from \cite[Proposition 3.5, p.~10]{lang1999fundamentals}.
\end{proof}

We are now in a place to state the main result of this section.

\begin{theorem}\label{thm_cont_s}
For a fixed $\Sigma\in\range_+\Gamma$, the implicit function $s(\,\cdot\,,\Sigma)$ in $(\ref{imp_func_of_psi})$ is of class $C^1$.
\end{theorem}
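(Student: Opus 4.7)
The plan is to invoke the Banach space version of the implicit function theorem applied to the equation $f(\psi,\Lambda)=\Sigma$. All the hypotheses are essentially in place from the preceding discussion. First, the domain $D=C_+(\Tbb)\times\Lscr_+^\Gamma$ is an open subset of the product Banach space $C(\Tbb)\times\range\Gamma$, where $\range\Gamma$ is finite-dimensional (hence a Banach space) and carries the restriction of the Frobenius norm. The target set $\range_+\Gamma$ is open in $\range\Gamma$. By Proposition \ref{prop_f_C1}, $f$ is of class $C^1$ on $D$.

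The crucial hypothesis to be verified is that, at any point $(\psi_0,\Lambda_0)\in D$ satisfying $f(\psi_0,\Lambda_0)=\Sigma$, the partial derivative $f'_2(\psi_0,\Lambda_0)\in L(\range\Gamma,\range\Gamma)$ is a topological isomorphism. This is where the diffeomorphism result quoted from \cite{zhu2017parametric} for the section $\omega(\cdot)=f(\psi_0,\cdot):\Lscr_+^\Gamma\to\range_+\Gamma$ is used: its Jacobian at $\Lambda_0$ is precisely $f'_2(\psi_0,\Lambda_0)$, which therefore is invertible; in finite dimension invertibility automatically yields a bounded inverse, so $f'_2(\psi_0,\Lambda_0)$ is an isomorphism between two copies of the same finite-dimensional Banach space.

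With these hypotheses, the implicit function theorem (in Banach spaces, see e.g.\ \cite[Ch.~I, Theorem 5.9]{lang1999fundamentals}) supplies an open neighborhood $U_{\psi_0}\subset C_+(\Tbb)$ of $\psi_0$, an open neighborhood $V_{\Lambda_0}\subset\Lscr_+^\Gamma$ of $\Lambda_0$, and a $C^1$ map $\tilde s:U_{\psi_0}\to V_{\Lambda_0}$ with $\tilde s(\psi_0)=\Lambda_0$ such that on $U_{\psi_0}\times V_{\Lambda_0}$ the equation $f(\psi,\Lambda)=\Sigma$ is solved uniquely by $\Lambda=\tilde s(\psi)$. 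The global bijectivity of $\omega(\psi,\cdot)$ for each fixed $\psi\in C_+(\Tbb)$, already recorded above, together with the continuity of $s(\,\cdot\,,\Sigma)$ shown earlier, forces $\tilde s$ to agree with the global implicit function $s(\,\cdot\,,\Sigma)$ on $U_{\psi_0}$ (perhaps after shrinking $U_{\psi_0}$ so that $s(U_{\psi_0},\Sigma)\subset V_{\Lambda_0}$). Since being $C^1$ is a local property and $\psi_0$ was arbitrary, $s(\,\cdot\,,\Sigma)$ is of class $C^1$ on all of $C_+(\Tbb)$.

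The only genuine point of care is that the partial $f'_2$ must be regarded as acting on the subspace $\range\Gamma$ rather than on $\Hfrak_n$, so that the isomorphism hypothesis of the implicit function theorem matches the ambient Banach spaces; once the problem is consistently parametrized by $\Lscr_+^\Gamma\subset\range\Gamma$ (as has already been done to remove redundancy), this is automatic. No further estimates beyond those already established for Proposition \ref{prop_f_C1} are needed.
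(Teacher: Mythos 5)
Your proof is correct and takes essentially the same route as the paper: invoke the Banach-space implicit function theorem using Proposition \ref{prop_f_C1} and the invertibility of $f'_2$ (from the diffeomorphism property of $\omega$), then patch the locally defined implicit functions together with the globally defined $s(\,\cdot\,,\Sigma)$. You simply spell out more of the routine verifications (finite-dimensionality giving boundedness of the inverse, the matching argument between $\tilde s$ and $s$) that the paper leaves implicit.
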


\begin{proof}
The assertion follows directly from the Banach space version of the implicit function theorem (see, e.g., \cite[Theorem 5.9, p.~19]{lang1999fundamentals}), because restrictions of $s(\,\cdot\,,\Sigma)$ must coincide with those locally defined, continuously differentiable implicit functions, which exist around every $\psi\in C_+(\Tbb)$ following from Proposition \ref{prop_f_C1} and the fact that the partial $f'_2(\psi,\Lambda)$ is a vector space isomorphism everywhere in $D$.
\end{proof}

\subsection{Reformulation in terms of the spectral factor}\label{subsec:reform}

Problem \ref{spec_estimation} can be reformulated in terms of the spectral factor of $G^*\Lambda G$ for $\Lambda\in\Lscr_+^\Gamma$. Though it may appear slightly more complicated, this reformulation is preferred from a numerical viewpoint, as the Jacobian of the new map corresponding to (\ref{mmt_map}) will have a smaller condition number when the solution is close to the boundary of the feasible set. This point has been illustrated in \cite{avventi2011spectral,enqvist2001homotopy} (see also later in Subsection \ref{subsec:comp-inv-Jac}). We shall first introduce a diffeomorphic spectral factorization.

According to \cite[Lemma 11.4.1]{FPZ-10}, given $\Lambda\in\Lscr_+^\Gamma$, the continuous spectral density $G^*\Lambda G$ admits a unique right outer spectral factor, i.e.,
\begin{equation}\label{W_Lambda}
G^*\Lambda G=W_\Lambda^*W_\Lambda.
\end{equation}
Furthermore, such a factor can be expressed in terms of the matrix $P\in\Hfrak_n$, the unique stabilizing solution of the Discrete-time Algebraic Riccati Equation (DARE)
\begin{equation}\label{DARE}
X = A^{*}X A- A^{*}X B(B^{*}X B)^{-1}B^{*}X A+\Lambda,
\end{equation}
as
\begin{equation}\label{W_spec_factor}
W_{\Lambda} (z) = L^{-*}B^{*}PA(zI -A)^{-1}B + L,
\end{equation}
where $L$ is the right (lower-triangular) Cholesky factor of the positive matrix $B^{*}PB(=L^*L)$.

Next, following the lines of \cite{avventi2011spectral}, let us introduce a change of variables by letting
\begin{equation}\label{C_factor}
C:=L^{-*}B^{*}P.
\end{equation}
Then it is not difficult to recover the relation $L=CB$. In this way, the spectral factor (\ref{W_spec_factor}) can be rewritten as
\begin{equation}\label{W_factor_C}
\begin{split}
W_{\Lambda} (z) & = CA(zI -A)^{-1}B + CB \\
 & =zCG,
\end{split}
\end{equation}
where the second equality holds because of the identity $A(zI-A)^{-1}+I=z(zI-A)^{-1}$. In view of this, the factorization (\ref{W_Lambda}) can then be rewritten as
\begin{equation}\label{spec_fact_Lambda}
G^*\Lambda G=G^*C^*CG, \quad \forall z\in\Tbb.
\end{equation}

As explained in \cite[Section A.5.5]{avventi2011spectral}, it is possible to build a bijective change of variables from $\Lambda$ to $C$ by carefully choosing the set where the ``factor'' $C$ lives. More precisely, let the set $\Cscr_+\subset\Cbb^{m\times n}$ contain those matrices $C$ that satisfy the following two conditions:
\begin{itemize}
	\item $CB$ is lower triangular with real and positive diagonal entries;
	\item $A-B(CB)^{-1}CA$ has eigenvalues in the open unit disk.
\end{itemize}
Define 
the map
\begin{equation}\label{h_map_spec_fact}
\begin{split}
h:\,\Lscr_+^\Gamma & \to\Cscr_+ \\
\Lambda & \mapsto C\textrm{ via } (\ref{C_factor}).
\end{split}
\end{equation}
It has been shown in \cite{avventi2011spectral} that the map $h$ is a \emph{homeomorphism} with an inverse
\begin{equation}\label{h_inverse}
\begin{split}
h^{-1}:\,\Cscr_+ & \to\Lscr_+^\Gamma \\
C & \mapsto \Lambda:=\Pi_{\range\Gamma}(C^*C),
\end{split}
\end{equation}
where $\Pi_{\range\Gamma}$ denotes the orthogonal projection operator onto $\range\Gamma$. This result has been further strengthened in \cite{zhu2017parametric}, as we quote below.

\begin{theorem}[\hspace{1sp}\cite{zhu2017parametric}]\label{thm_diffeo_fact}
	The map $h$ of spectral factorization is a diffeomorphism.
\end{theorem}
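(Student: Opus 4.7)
The plan is to upgrade the known homeomorphism $h$ to a diffeomorphism by invoking the inverse function theorem on its inverse. The map $h^{-1}(C)=\Pi_{\range\Gamma}(C^*C)$ is manifestly of class $C^\infty$, since it is the composition of a fixed linear projection with a quadratic polynomial in the entries of $C$. Thus the real work is to verify that the Fr\'echet derivative $Dh^{-1}(C)$ is a linear isomorphism at every $C\in\Cscr_+$. The inverse function theorem will then supply a local $C^1$ inverse of $h^{-1}$ about each point, which must coincide with $h$ by global bijectivity; so $h$ will inherit the $C^1$ regularity everywhere.

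The first step is to write the derivative down explicitly,
\[
Dh^{-1}(C)[\delta C]=\Pi_{\range\Gamma}\bigl(\delta C^*C+C^*\delta C\bigr),
\]
where $\delta C$ ranges over the tangent space $T_C\Cscr_+$, i.e., those $\delta C\in\Cbb^{m\times n}$ for which $\delta C\,B$ is lower triangular with real diagonal. Because $h$ is already a homeomorphism between open subsets of finite-dimensional real vector spaces, invariance of dimension equates the real dimensions of $T_C\Cscr_+$ and $\range\Gamma$, so it suffices to prove injectivity. If $Dh^{-1}(C)[\delta C]=0$, then by \eqref{Range_Gamma_ortho} we have $G^*(\delta C^*C+C^*\delta C)G\equiv 0$ on $\Tbb$; setting $W:=CG$ (invertible on $\Tbb$ since $\Lambda\in\Lscr_+^\Gamma$) and $\delta W:=\delta C\,G$, this collapses to $W^*\delta W+\delta W^*W=0$, equivalently $U+U^*=0$ on $\Tbb$ for the matrix function $U:=\delta W\,W^{-1}$.

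The crux will be an analytic-continuation step that upgrades this boundary identity to $U\equiv 0$. By the very definition of $\Cscr_+$, the factor $W_\Lambda(z)=zW(z)$ is outer, so $W^{-1}(z)=zW_\Lambda^{-1}(z)$ is analytic in $|z|>1$, as is $\delta W(z)=\delta C(zI-A)^{-1}B$; hence $U$ is analytic on $|z|>1$, and a short Laurent expansion at infinity gives $U(\infty)=\delta C\,B\,L^{-1}$ with $L=CB$. The anti-Hermitian identity on $\Tbb$, read through the Schwarz reflection $z\mapsto 1/\bar z$, continues $U$ analytically across the unit circle into a bounded entire function; Liouville's theorem then forces $U\equiv U_0$, a constant anti-Hermitian matrix.

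The final step cashes in the triangular normalization built into $\Cscr_+$. Since $\delta C\,B$ is lower triangular with real diagonal (tangent constraint) and $L^{-1}$ inherits the same structure from $L$, the product $U_0=\delta C\,B\,L^{-1}$ is lower triangular with real diagonal, and a matrix of that form that is also anti-Hermitian must vanish. Therefore $\delta W=U_0W\equiv 0$, i.e., $\delta C\,G(z)\equiv 0$ on $\Tbb$, and reachability of $(A,B)$ (which makes the columns of $G(z)$ span $\Cbb^n$ as $z$ varies) forces $\delta C=0$. The step I expect to be the main obstacle is exactly this Liouville-plus-triangularity argument: writing down the derivative and its boundary identity is routine, but the rigidity that eliminates the constant anti-Hermitian ambiguity on $\Tbb$ is what makes the outer-factor parametrization $\Cscr_+$ ``as good as'' $\Lscr_+^\Gamma$ from the differentiable standpoint, and it depends delicately on the normalization of $CB$ just above the theorem statement.
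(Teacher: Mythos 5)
The paper cites this theorem from a companion reference and does not reproduce a proof, so no direct comparison is available, but your argument is correct and self-contained given the facts already stated in this paper (that $h$ is a bijection onto $\Cscr_+$, that $\Cfrak$ and $\range\Gamma$ share the same real dimension, and formula \eqref{Range_Gamma_ortho}). Showing $h^{-1}(C)=\Pi_{\range\Gamma}(C^*C)$ is polynomial and then checking that $Dh^{-1}(C)$ has trivial kernel is exactly the right reduction; the ``outer factor plus triangular normalization'' rigidity is what actually closes the loop, and you identified it as the crux.

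One small remark on rigor: the ``Schwarz reflection'' phrasing can be replaced by a cleaner rational-function argument. Since $U(z)=\delta W(z)W^{-1}(z)$ and $W^{-1}(z)=zW_\Lambda^{-1}(z)$ are rational, the boundary identity $U+U^*\equiv 0$ on $\Tbb$ extends to an identity of rational functions because $\Tbb$ has accumulation points. But all poles of $U$ lie in $|z|<1$ (they come from $A$ and from the closed-loop matrix $\varPi=A-B(CB)^{-1}CA$, both Schur stable by the definition of $\Cscr_+$), whereas $U^*(z)=U(1/\bar z)^*$ has all its poles in $|z|>1$; the identity $U=-U^*$ therefore forces $U$ to be pole-free in $\Cbb$, and $U(\infty)=\delta C\,BL^{-1}$ is finite, so $U$ is constant. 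This sidesteps any worry about what ``Schwarz reflection for matrix functions'' means. Also, ``$W^{-1}$ is analytic in $|z|>1$'' is slightly imprecise since $W^{-1}(z)=zW_\Lambda^{-1}(z)$ has a simple pole at $\infty$; it is the product $U=\delta W\,W^{-1}$ that is analytic at $\infty$, because $\delta W$ vanishes there to first order. Neither imprecision affects the conclusion.

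Finally, the last step (reachability kills $\delta C$) is best stated via the Laurent coefficients: $\delta C\,G(z)\equiv 0$ gives $\delta C\,A^kB=0$ for all $k\ge 0$, and the reachability matrix $[B\ AB\ \cdots\ A^{n-1}B]$ has full row rank, so $\delta C=0$. This is what you meant by ``the columns of $G(z)$ span $\Cbb^n$ as $z$ varies,'' and it is correct.
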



Now we can introduce the moment map $g:\,C_+(\Tbb)\times\Cscr_+\to\range_+\Gamma$ parametrized in the new variable $C$ as
\begin{equation}\label{mmt_map_C}
g(\psi,C):=f(\psi,h^{-1}(C))=\int G\psi(G^*C^*CG)^{-1}G^*,
\end{equation}
and the sectioned map when $\psi\in C_+(\Tbb)$ is held fixed
\begin{equation}\label{tau_map}
\tau:=\omega\circ h^{-1}:\,\Cscr_+\to\range_+\Gamma,
\end{equation}
where $\omega$ has been defined in (\ref{omega_map}). A corresponding problem is formulated as follows.

\begin{problem}\label{spec_estima_C}
	Given the filter bank $G(z)$ in $(\ref{trans_func})$, the matrix $\Sigma\in\range_+\Gamma$, and an arbitrary $\psi\in C_+(\Tbb)$, find the parameter $C\in\Cscr_+$ such that
	\begin{equation}\label{tau_of_C=Sigma}
	\tau(C)=\Sigma.
	\end{equation}
\end{problem}

The next corollary is an immediate consequence of Theorems \ref{thm_cont_s} and \ref{thm_diffeo_fact} and is stated without proof.

\begin{corollary}\label{cor_wellposed_C}
The map $\tau$ in $(\ref{tau_map})$ is a diffeomorphism. Moreover, if we fix the matrix $\Sigma$ and allow the prior $\psi$ to vary, then the solution map 
\begin{equation}
h\circ s(\,\cdot\,,\Sigma):\,C_+(\Tbb)\to\Cscr_+
\end{equation}
is of class $C^1$.
\end{corollary}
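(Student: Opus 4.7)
The plan is to prove both statements by straightforward composition arguments, since all the hard work has already been done in Theorems \ref{thm_cont_s} and \ref{thm_diffeo_fact}.

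First I would tackle the claim that $\tau$ is a diffeomorphism. By definition $\tau = \omega \circ h^{-1}$, where $\omega: \Lscr_+^\Gamma \to \range_+\Gamma$ was recalled in the paragraph following \eqref{omega_map} to be a diffeomorphism (a result from \cite{zhu2017parametric}), and $h^{-1}: \Cscr_+ \to \Lscr_+^\Gamma$ is a diffeomorphism by Theorem \ref{thm_diffeo_fact}. Since the composition of two $C^1$ maps is $C^1$ (by the chain rule in Banach spaces, applied here in finite dimensions), and the composition of their $C^1$ inverses in the reverse order gives a $C^1$ inverse of $\tau$, we conclude that $\tau$ is itself a diffeomorphism. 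No calculation is needed beyond unwinding the definition $\tau = \omega \circ h^{-1}$ and citing the two results.

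Next I would address the regularity of the solution map $h \circ s(\,\cdot\,,\Sigma)$. By Theorem \ref{thm_cont_s}, for fixed $\Sigma \in \range_+\Gamma$ the map $s(\,\cdot\,,\Sigma): C_+(\Tbb) \to \Lscr_+^\Gamma$ is of class $C^1$. By Theorem \ref{thm_diffeo_fact}, the map $h: \Lscr_+^\Gamma \to \Cscr_+$ is (in particular) $C^1$. Invoking the chain rule once more, the composition $h \circ s(\,\cdot\,,\Sigma): C_+(\Tbb) \to \Cscr_+$ is of class $C^1$, which is precisely the second assertion.

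There is no real obstacle here; the corollary really is a bookkeeping consequence of the two theorems it depends on. The only subtlety worth flagging explicitly in the write-up is that the chain rule for $s(\,\cdot\,,\Sigma)$ is being applied between infinite-dimensional Banach spaces (the domain $C_+(\Tbb)$ sits inside $C(\Tbb)$), whereas the chain rule for $\tau$ and for $h \circ s$ involves both infinite- and finite-dimensional Banach spaces; in all cases the generalized chain rule (e.g., \cite[Proposition 3.3, p.~8]{lang1999fundamentals}) applies without change, so no new estimates are required.
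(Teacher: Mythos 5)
Your proof is correct and is precisely the composition/chain-rule argument the paper has in mind; indeed the paper itself states the corollary without proof, calling it ``an immediate consequence of Theorems \ref{thm_cont_s} and \ref{thm_diffeo_fact}.'' You correctly supply the one ingredient the paper's attribution omits, namely that the diffeomorphism property of $\omega$ (recalled from \cite{zhu2017parametric} after \eqref{omega_map}) is also needed for the first assertion, and your remark about the Banach-space chain rule applying uniformly across the finite- and infinite-dimensional factors is accurate.
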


Therefore, Problem \ref{spec_estima_C} is also well-posed exactly like Problem \ref{spec_estimation}. Next we shall elaborate how to solve the equation (\ref{tau_of_C=Sigma}) numerically using a continuation method. 

\begin{remark}
Notice that we can also reparametrize the dual optimization problem \eqref{J_dual} in the domain of spectral factors by considering the cost function $\Jbb_\psi\circ h^{-1}$ with a fixed $\psi$ as done in \cite{avventi2011spectral}. However, convexity is lost in this way since neither the cost function nor the feasible set $\Cscr_+$ is convex. In this case, convergence of gradient-based descent algorithms starting from an arbitrary feasible point seems hard to guarantee.\footnote{A local convergence result can be found in \cite[Chapter 4]{Zhu_PhDthesis}.} A similar continuation solver can be built in the style of \cite{enqvist2001homotopy} but we shall not insist on this point here.
\end{remark}

\section{A numerical continuation solver}
\label{sec:numeric}

In this section we shall work with coordinates in the sense explained next since it is convenient for analysis. Specifically, we know from \cite[Proposition 3.1]{FPZ-12} that $\range\Gamma\subset\Hfrak_n$ is a linear space with real dimension $M:=m(2n-m)$. At the same time, the set $\Cscr_+$ is an open subset of the linear space 
\begin{equation}\label{C_space}
\begin{aligned}
\Cfrak:=\left\{\,C\in\Cbb^{m\times n}\right.\,: & \, CB \textrm{ is lower triangular} \\
 & \left.\textrm{with real diagonal entries}\,\right\},
\end{aligned}
\end{equation}
whose real dimension coincides with that of $\range\Gamma$ (cf. the proof of \cite[Theorem A.5.5]{avventi2011spectral}). We can hence choose orthonormal bases $\{\Lambdab_1,\Lambdab_2,\dots,\Lambdab_M\}$ and $\{\Cb_1,\dots,\Cb_M\}$ for $\range\Gamma$ and $\Cfrak$, respectively, and parametrize $\Lambda\in\Lscr_+^\Gamma$ and $C\in\Cscr_+$ as
\begin{equation}\label{Lambda_C_coord}
\begin{split}
\Lambda(x) & =x_1\Lambdab_1+x_2\Lambdab_2+\cdots+x_M\Lambdab_M, \\
C(y) & =y_1\Cb_1+y_2\Cb_2+\cdots+y_M\Cb_M,
\end{split}
\end{equation}
for some $x_j,y_j\in\Rbb$, $j=1,\dots,M$. We shall then introduce some abuses of notation and make no distinction between the variable and its coordinates. For example, $f(\psi,x)$ is understood as $f(\psi,\Lambda(x))$ defined previously and similarly, $\tau(y)$ means $\tau(C(y))$.

Instead of dealing with one particular equation (\ref{tau_of_C=Sigma}), a continuation method (cf.~\cite{allgower1990continuation}) aims to solve a family of equations related via a homotopy, i.e., a continuous deformation. In our context, there are two ways to construct different homotopies. One is to deform the covariance data $\Sigma$ and study the equation (\ref{tau_of_C=Sigma}) for a fixed $\psi$. Such an argument has been used extensively in \cite{Georgiou-06,georgiou2005solution}. Here we shall adopt an alternative, that is, deforming the prior function $\psi$ while keeping the covariance matrix fixed, which can be seen as a multivariate generalization of the argument in \cite[Section 4]{enqvist2001homotopy}. An advantage to do so is that we can obtain a family of matrix spectral densities that are consistent with the covariance data.

The set $C_+(\Tbb)$ is easily seen to be convex. One can then connect $\psi$ with the constant function $\oneb$ (taking value $1$ on $\Tbb$) via the line segment
\begin{equation}\label{path_convex}
p(t)=(1-t)\oneb+t\psi,\quad t\in U=[0,1],
\end{equation}
and construct a convex homotopy $U\times \Cscr_+ \to \range_+\Gamma$ given by
\begin{equation}
(t,y)\mapsto g(p(t),y).
\end{equation}
Now let the covariance matrix $\Sigma\in\range_+\Gamma$ be fixed whose coordinate vector is $x_\Sigma$, and consider the family of equations
\begin{equation}\label{family_eqns}
g(p(t),y)=x_\Sigma
\end{equation}
parametrized by $t\in U$. By Corollary \ref{cor_wellposed_C}, we will have a continuously differentiable solution path in the set $\Cscr_+$
\begin{equation}\label{path_solution}
y(t)=h(s(p(t),x_\Sigma)).
\end{equation}
Moreover, differentiating (\ref{family_eqns}) on both sides w.r.t $t$, one gets
\[g'_1(p(t),y(t);p'(t))+g'_2(p(t),y(t);y'(t))=0,\]
where $p'(t)\equiv\psi-\oneb$ independent of $t$, and the partial derivatives are given by
\begin{subequations}
\begin{align}
g'_1(\psi,y) & = f'_1(\psi,h^{-1}(y)), \label{g'_1} \\
g'_2(\psi,y) & = f'_2(\psi,h^{-1}(y))J_{h^{-1}}(y). \label{g'_2}
\end{align}
\end{subequations}
The semicolon notation here has the meaning e.g., $g'_1(\,\cdot\,,\,\cdot\,;\xi):=g'_1(\,\cdot\,,\,\cdot\,)(\xi)$, i.e., the operator $g'_1(\,\cdot\,,\,\cdot\,)$ applied to the function $\xi$.
The symbol $J_{h^{-1}}(y)$ means the Jacobian matrix of $h^{-1}$ evaluated at $y$.
Hence the path $y(t)$ is a solution to the initial value problem (IVP)
\begin{equation}\label{IVP}
\left \{
  \begin{aligned}
    y'(t) & =-\left[\,g'_2(p(t),y(t))\,\right]^{-1}g'_1(p(t),y(t);p'(t)) \\
    y(0) & =y^{(0)}
  \end{aligned} \right..
\end{equation}
Notice that the partial $g'_2$ is a finite-dimensional Jacobian matrix which is invertible everywhere in $D$ since both terms on the right hand side of (\ref{g'_2}) are nonsingular (cf.~\cite{zhu2017parametric}). From classical results on the uniqueness of solution to an ODE, we know that the IVP formulation and (\ref{family_eqns}) are in fact equivalent.

The initial value $y^{(0)}$ corresponds to $\psi=\oneb$, and it is the spectral factor of the so-called maximum entropy solution, i.e., solution to the problem
\begin{equation}
\underset{\Phi\in\Sfrak_m}{\text{maximize}}\ \int\log\det\Phi\quad \text{subject to } (\ref{mmt_constraint}).
\end{equation}
As has been worked out in \cite{georgiou2002spectral}, the above optimization problem has a unique solution $\Phi=(G^*\Lambda G)^{-1}$ with
\[\Lambda=\Sigma^{-1}B(B^*\Sigma^{-1}B)^{-1}B^*\Sigma^{-1},\]
from which the corresponding spectral factor $C$ can be computed as
\begin{equation}\label{C_0}
C=L^{-*}B^*\Sigma^{-1},
\end{equation}
where $L$ is the right Cholesky factor of $B^*\Sigma^{-1}B$. According to \cite{georgiou2002spectral}, such $C$ is indeed in the set $\Cscr_+$, i.e., $CB$ lower triangular and the closed-loop matrix is stable.


At this stage, any numerical ODE solver can in principle be used to solve the IVP and obtain the desired solution $y(1)$ corresponding to a particular prior $\psi$. However, this IVP is special in the sense that for a fixed $t$, $y(t)$ is a solution to a finite-dimensional nonlinear system of equations, for which there are numerical methods (such as Newton's method) that exhibit rapid local convergence properties, while a general-purpose ODE solver does not take this into account. Out of such consideration, a method called ``predictor-corrector'' is recommended in \cite{allgower1990continuation} to solve the IVP, which is reviewed next.

Suppose that for some $t\in U$ we have got a solution $y(t)$ and we aim to solve (\ref{family_eqns}) at $t+\delta t$ where $\delta t$ is a chosen step length. The predictor step is just numerical integration of the differential equation in (\ref{IVP}) using e.g., the Euler method
\begin{equation}\label{predict_Euler}
z(t+\delta t):=y(t)+v(t)\delta t,
\end{equation}
where $v(t):=-\left[\,g'_2(p(t),y(t))\,\right]^{-1}g'_1(p(t),y(t);p'(t))$.
The corrector step is accomplished by the Newton's method to solve (\ref{family_eqns}) initialized at the predictor $z(t+\delta t)$. If the new solution $y(t+\delta t)$ can be attained in this way, one can repeat such a procedure until reaching $t=1$.
The algorithm is summarized in the table.
 
\begin{algorithm}[H] 
\caption{Predictor-Corrector}
\label{alg:predict-correct}
\begin{algorithmic}
\STATE{Let $k=0$, $t=0$, and $y^{(0)}$ initialized as in (\ref{C_0})}
\STATE{Choose a sufficiently small step length $\delta t$}
\WHILE{$t\leq1$}
\STATE{Predictor: $z^{(k+1)}=y^{(k)}+v(t)\delta t$ the Euler step (\ref{predict_Euler})}
\STATE{Corrector: solve (\ref{family_eqns}) at $t+\delta t$ for $y^{(k+1)}$ initiated at $z^{(k+1)}$ using Newton's method }
\STATE{Update $t:=\min\{1,t+\delta t\}$, $k:=k+1$}
\ENDWHILE
\RETURN The last $y^{(k)}$ corresponding to $t=1$
\end{algorithmic}
\end{algorithm}

\subsection{Convergence analysis}\label{subsec:converge}

We are now left to determine the step length $\delta t$ so that the corrector step can converge and the algorithm can return the target solution $y(1)$ in a finite number of steps. We show next that one can choose a uniformly constant step length $\delta t$ such that the predictor $z^{(k)}$ will be close enough to the solution $y^{(k)}$ for the Newton's method to converge locally.\footnote{Notice that convergence results in \cite{allgower1990continuation} under some general assumptions do not apply here directly.} We shall need the next famous Kantorovich theorem which can be found in \cite[p.~421]{ortega2000iterative}.

\begin{theorem}[Kantorovich]
	Assume that $f:\, D\subset\Rbb^n\to\Rbb^n$ is differentiable on a convex set $D_0\subset D$ and that
	\[\|f'(x)-f'(y)\|\leq\gamma\|x-y\|,\quad\forall\,x,y\in D_0\]
	for some $\gamma>0$. Suppose that there exists an $x^{(0)}\in D_0$ such that $\alpha=\beta
	\gamma\eta\leq1/2$ for some $\beta,\eta> 0$ meeting
    \[\beta\geq\|f'(x^{(0)})^{-1}\|,\quad\eta\geq\|f'(x^{(0)})^{-1}f(x^{(0)})\|.\]
	Set
	\begin{subequations}
	\begin{align}
	    t^*=(\beta\gamma)^{-1}\left[\,1-(1-2\alpha)^{1/2}\,\right], \label{t*}\\
	    t^{**}=(\beta\gamma)^{-1}\left[\,1+(1-2\alpha)^{1/2}\,\right] \label{t**},
	    \end{align}
	\end{subequations}
	and assume that the closed ball $\overline{B}(x^{(0)},t^*)$ is contained in $D_0$.
	Then the Newton iterates
	\[x^{(k+1)} = x^{(k)}-f'(x^{(k)})^{-1}f(x^{(k)}),\quad k = 0, 1,\dots\]
	are well-defined, remain in $\overline{B}(x^{(0)},t^*)$, and converge to a solution $x$ of $f(x) = 0$ which is unique in $\overline{B}(x^{(0)},t^{**})\cap D_0$.
\end{theorem}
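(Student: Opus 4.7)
The plan is to reduce the vector Newton iteration to Kantorovich's classical scalar \emph{majorant} argument. I would introduce the quadratic $\phi(t) = \tfrac{\gamma}{2}t^2 - t/\beta + \eta/\beta$, whose two roots are precisely $t^*$ and $t^{**}$ in \eqref{t*}--\eqref{t**}, and generate a scalar Newton sequence $t_0 = 0$, $t_{k+1} = t_k - \phi(t_k)/\phi'(t_k)$. A short direct computation, using only $\alpha = \beta\gamma\eta \leq 1/2$, shows that $\{t_k\}$ is monotonically increasing and converges to $t^*$; all the quantitative information needed to control the vector iteration will be packed into this one-dimensional object.

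The heart of the argument is a simultaneous induction on $k$ establishing (i) $x^{(k)} \in \overline{B}(x^{(0)}, t_k) \subset D_0$, (ii) invertibility of $f'(x^{(k)})$ with $\|f'(x^{(k)})^{-1}\| \leq -1/\phi'(t_k)$, and (iii) $\|x^{(k+1)} - x^{(k)}\| \leq t_{k+1} - t_k$. I would prove (ii) by writing $f'(x^{(k)}) = f'(x^{(0)})\bigl[I + f'(x^{(0)})^{-1}(f'(x^{(k)}) - f'(x^{(0)}))\bigr]$ and invoking the Banach perturbation lemma, dominating the perturbation via the Lipschitz hypothesis together with (i). For (iii) the crucial computation is the Taylor-remainder identity
\[
f(x^{(k+1)}) = \int_0^1 \bigl[f'(x^{(k)} + s(x^{(k+1)} - x^{(k)})) - f'(x^{(k)})\bigr](x^{(k+1)} - x^{(k)})\,ds,
\]
which yields $\|f(x^{(k+1)})\| \leq \tfrac{\gamma}{2}\|x^{(k+1)} - x^{(k)}\|^2$ and mirrors the exact scalar identity $\phi(t_{k+1}) = \tfrac{\gamma}{2}(t_{k+1}-t_k)^2$; combining with (ii) advances (iii) by one step.

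Once the induction is in place, $\{x^{(k)}\}$ is Cauchy because its successive distances are majorised by those of a convergent scalar sequence; call its limit $x$. Passing to the limit in the Newton update together with $\|f(x^{(k)})\| \to 0$ yields $f(x) = 0$ and automatically $\|x - x^{(0)}\| \leq t^*$. For uniqueness in $\overline{B}(x^{(0)}, t^{**}) \cap D_0$, if $y$ were another zero I would write $0 = f(y) - f(x) = \bigl[\int_0^1 f'(x + s(y - x))\,ds\bigr](y - x)$, apply the Banach perturbation lemma once more, and use the relation $t^* + t^{**} = 2/(\beta\gamma)$ to certify that the averaged operator is invertible, forcing $y = x$.

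The main obstacle is calibrating the scalar majorant so that the three inductive quantities propagate cleanly together. Any looser choice of $\phi$ either breaks the Banach perturbation step in (ii) or degrades the threshold $\alpha \leq 1/2$; verifying that $-\phi'(t_k)$ bounds $\|f'(x^{(k)})^{-1}\|$ uniformly along the iteration requires keeping the scalar and vector estimates in exact step, which is the subtlety that makes the proof genuinely nontrivial despite its classical appearance.
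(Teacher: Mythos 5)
The paper does not prove this theorem at all: it is quoted verbatim as a known result, with a citation to Ortega and Rheinboldt's book (p.~421), so there is no ``paper proof'' to compare against. Your proposal is the classical Kantorovich majorant argument, and the existence and convergence part is laid out correctly: the quadratic $\phi(t)=\tfrac{\gamma}{2}t^2 - t/\beta + \eta/\beta$ has roots $t^*,t^{**}$, the scalar Newton iterates $t_k$ increase monotonically to $t^*$, and the simultaneous induction on the three items (location in the ball of radius $t_k$, inverse bound $\|f'(x^{(k)})^{-1}\|\le -1/\phi'(t_k)$ via the Banach perturbation lemma, step bound $\|x^{(k+1)}-x^{(k)}\|\le t_{k+1}-t_k$ via the Taylor-remainder identity) is exactly the right chain and closes correctly.

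The uniqueness sketch has a genuine gap, however. Writing $0=f(y)-f(x)=\bigl[\int_0^1 f'(x+s(y-x))\,ds\bigr](y-x)$ and comparing the averaged operator to $f'(x^{(0)})$ gives a perturbation bound
\[
\beta\gamma\int_0^1\bigl[(1-s)\|x-x^{(0)}\|+s\|y-x^{(0)}\|\bigr]\,ds
\;\le\; \beta\gamma\,\frac{t^*+t^{**}}{2} \;=\; 1,
\]
with equality possible; the Banach perturbation lemma requires the bound to be strictly less than $1$, so invertibility of the averaged operator does not follow on the boundary of $\overline{B}(x^{(0)},t^{**})$. Your argument thus yields uniqueness only in the open ball $B(x^{(0)},t^{**})\cap D_0$. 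In fact the closed-ball claim cannot follow from these hypotheses alone: the scalar majorant $\phi$ itself satisfies every hypothesis with $x^{(0)}=0$ and $D_0=\Rbb$, yet has a second zero at $t^{**}\in\overline{B}(0,t^{**})$. So either one restricts to the open ball, or one needs an extra hypothesis (or a sharper inductive argument showing $\|y-x^{(k)}\|\le t^{**}-t_k$, which still only gives $\|y-x\|\le t^{**}-t^*$ and hence requires a further step). You should flag that the statement as quoted is borderline and that your uniqueness argument proves the open-ball version.
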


In order to apply the above theorem, we need to take care of the locally Lipschitz property. To this end, we shall first introduce a compact set in which we can take extrema of various norms.

\begin{lemma}\label{lem_compact_K}
There exists a compact set $K\subset\Cscr_+$ that contains the solution path $\{y(t)\,:\,t\in U\}$ indicated in $(\ref{path_solution})$ in its interior.

\end{lemma}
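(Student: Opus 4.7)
The plan is to exploit the fact that the solution path $P:=\{y(t):t\in U\}$ is a continuous image of the compact interval $U=[0,1]$ inside the open set $\Cscr_+$ of the finite-dimensional normed space $\Cfrak$, and then thicken $P$ by a small but uniform amount to obtain the desired compact neighborhood.

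First, I would note that the path $y:U\to\Cscr_+$ defined in \eqref{path_solution} is of class $C^1$ by Corollary \ref{cor_wellposed_C} (as the composition of $h$ with the $C^1$ implicit function $s(\,\cdot\,,x_\Sigma)$), hence continuous on the compact interval $U$. Therefore $P$ is compact in $\Cscr_+$. Because $\Cscr_+$ is an open subset of the finite-dimensional vector space $\Cfrak$ described in \eqref{C_space}, its complement $F:=\Cfrak\setminus\Cscr_+$ is closed.

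Next, since $P$ is compact and $F$ is closed in the finite-dimensional space $\Cfrak$, and the two sets are disjoint, the Euclidean distance
\[
d:=\inf_{y\in P,\,C\in F}\|y-C\|
\]
is strictly positive (this is the standard separation argument: if $d=0$, one could extract a subsequence from $P$ converging to a point of $F$, contradicting $P\cap F=\varnothing$). Set
\[
K:=\left\{\,C\in\Cfrak\,:\,\mathrm{dist}(C,P)\leq d/2\,\right\}.
\]
Then $K$ is closed, and it is bounded because $P$ is bounded and $K$ is contained in the closed $d/2$-neighborhood of $P$; since $\Cfrak$ is finite-dimensional, $K$ is compact. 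Moreover every $C\in K$ satisfies $\mathrm{dist}(C,P)\leq d/2<d$, so $C\notin F$, i.e. $K\subset\Cscr_+$. Finally, $P$ lies in the open set $\{C:\mathrm{dist}(C,P)<d/2\}$, which is contained in the interior of $K$, so $P\subset\mathrm{int}(K)$ as required.

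There is really no serious obstacle here: the argument is a standard application of the tubular-thickening of a compact set within an open subset of finite-dimensional Euclidean space. The only minor subtlety is to justify that $\Cfrak$ is finite-dimensional (which follows from its description in \eqref{C_space} as a subset of $\Cbb^{m\times n}$ cut out by linear constraints) and that the induced topology on $\Cscr_+$ agrees with the one used throughout the paper; both are immediate from the setup preceding the lemma.
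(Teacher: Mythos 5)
Your proof is correct. The approach differs from the paper's in its construction of $K$: you take the uniform $d/2$-thickening of the path $P$, where $d>0$ is the distance from $P$ to the closed complement $\Cfrak\setminus\Cscr_+$, whereas the paper covers $P$ by open balls $B(y(t))$ whose closures lie in $\Cscr_+$, extracts a finite subcover by compactness, and takes $K$ to be the closure of that finite union. Both are standard tubular-neighborhood arguments in finite dimensions and both are valid. Your version has the small advantage of making the thickness uniform and explicit, which dovetails naturally with the quantity $c_1:=\min_{t\in U}d(y(t),\partial K)$ introduced in the subsequent convergence analysis (one would get $c_1\geq d/2$ for free), while the paper's finite-subcover construction is slightly more indirect on this point, since the radii of the covering balls need not be uniform a priori. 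Either way, the key facts driving the proof are identical: $y(\cdot)$ is continuous on the compact interval $U$ (by Corollary~\ref{cor_wellposed_C}), $\Cscr_+$ is open in the finite-dimensional space $\Cfrak$, and closed bounded sets in $\Cfrak$ are compact.
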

\begin{proof}
    We know from previous reasoning that the solution path is contained in the open set $\Cscr_+$ which is a subset of the \emph{finite}-dimensional vector space $\Cfrak$ in \eqref{C_space}. By continuity, the set $\{y(t)\}$ is easily seen to be compact, i.e., closed and bounded, and thus admits a compact neighborhood $K\subset\Cscr_+$. Such a neighborhood $K$ can be constructed explicitly as follows. Let $B(y(t))\subset\Cscr_+$ be an open ball centered at $y(t)$ such that its closure is also contained in $\Cscr_+$. Then the set	$\bigcup_{t\in U}\,B(y(t))$
	is an open cover of $\{y(t)\}$, which by compactness, has a finite subcover
	\[\bigcup_{k=1}^n\,B(y(t_k))\]
	whose closure can be taken as $K$.
\end{proof}

\begin{lemma}\label{lem_Lip_cont}
For a fixed $t\in U$, the derivative $g'_2(p(t),y)\in L(\Cfrak,\range\Gamma)$ is locally Lipschitz continuous in $y$ in any convex subset of the compact set $K$ constructed in Lemma~$\ref{lem_compact_K}$, where $p(t)$ is the line segment given in $(\ref{path_convex})$. Moreover, the Lipschitz constant can be made independent of $t$.
\end{lemma}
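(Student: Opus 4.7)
The plan is to show something stronger, namely that $g_2'(p(t),y)$ is in fact continuously differentiable (indeed smooth) in $y$ on a neighborhood of $K$ in $\Cfrak$, so that its derivative is bounded on the compact set $K$, from which Lipschitz continuity on any convex subset follows by the mean-value inequality. The uniformity in $t$ will come for free because $p(t)$ ranges over a compact line segment in $C_+(\Tbb)$ and all the estimates depend on $p(t)$ linearly through its max-norm.

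Using the decomposition (\ref{g'_2}), I would analyze the two factors separately. For $J_{h^{-1}}(y)$: recall that $h^{-1}(C)=\Pi_{\range\Gamma}(C^*C)$, so in the coordinate system introduced in (\ref{Lambda_C_coord}) the map $y\mapsto h^{-1}(y)$ is a $\range\Gamma$-valued quadratic polynomial in $y$. Hence $y\mapsto J_{h^{-1}}(y)$ is an affine map and is in particular $C^\infty$, with operator norm uniformly bounded on any compact set. For $f_2'(\psi,\Lambda)$: the only $\Lambda$-dependence in (\ref{f_2partial}) is through the continuous matrix function $(G^*\Lambda G)^{-1}$, whose Fréchet derivative in $\Lambda$ is $\delta\Lambda \mapsto -(G^*\Lambda G)^{-1}(G^*\delta\Lambda G)(G^*\Lambda G)^{-1}$; by Lemma~\ref{lem_uniform_converg} together with the uniform coercivity bound $G^*\Lambda G \geq \mu I$ valid on the compact image $h^{-1}(K)\subset\Lscr_+^\Gamma$, this derivative is uniformly bounded in the operator norm, and (by the same computation) Lipschitz continuous in $\Lambda$.

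Combining the two, $y\mapsto g_2'(\psi,y)$ is $C^1$ on a neighborhood of $K$ with operator-norm derivative bounded by a constant depending on $\psi$ only through $\|\psi\|_\infty$, $G_{\max}$, $\mu$, the norm of $J_{h^{-1}}$ on $K$, and the Lipschitz constant of $y\mapsto h^{-1}(y)$ on $K$, all of which are finite. The mean-value inequality then yields
\begin{equation*}
\|g_2'(p(t),y_1)-g_2'(p(t),y_2)\| \leq \gamma(t)\,\|y_1-y_2\|
\end{equation*}
for any $y_1,y_2$ in a convex subset of $K$. Finally, the $t$-dependence enters only through the prior $p(t)=(1-t)\oneb+t\psi$, which satisfies $\|p(t)\|_\infty\leq\max\{1,\|\psi\|_\infty\}$ for every $t\in[0,1]$; taking $\gamma:=\sup_{t\in[0,1]}\gamma(t)<\infty$ produces a Lipschitz constant independent of $t$, as required.

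I expect the main obstacle to be purely bookkeeping: correctly interpreting the partial derivative $\partial_y g_2'$ as a bounded bilinear (equivalently trilinear after unpacking the action of $g_2'$) form taking values in $L(\Cfrak,\range\Gamma)$, and carefully tracking which norms are used where so that submultiplicativity of the Frobenius norm and the inequality (\ref{inequ_int_norm}) can be applied as in the proof of Proposition~\ref{prop_f_C1}. Once the derivative is written explicitly, the required bounds follow in the same style as there, so no new analytical ingredient is needed.
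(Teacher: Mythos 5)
Your proposal is correct and follows essentially the same route as the paper: show that $g'_2(\psi,\cdot)$ is $C^1$ on a neighborhood of $K$ (the paper does this by writing out $g''_{22}$ explicitly via the chain rule through $h^{-1}$ and asserting its continuity), bound the second derivative on the compact set $U\times K$, and invoke the mean-value inequality to get a $t$-uniform Lipschitz constant. Your observation that $h^{-1}$ is quadratic in coordinates so $J_{h^{-1}}$ is affine is a slightly more explicit justification for the smoothness of the inner factor, but it is a minor elaboration rather than a different argument.
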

\begin{proof}
It is a well known fact that a continuously differentiable function is locally Lipschitz. Hence we need to check the continuity of the second-order derivative following from (\ref{g'_2})
\begin{equation}\label{g''_22_differential}
\begin{split}
g''_{22}(\psi,y;\delta y_1,\delta y_2)= & f''_{22}(\psi,h^{-1}(y);J_{h^{-1}}(y)\delta y_2,J_{h^{-1}}(y)\delta y_1) \\
 & +f'_2(\psi,h^{-1}(y))\frac{d}{dy}J_{h^{-1}}(y)(\delta y_2,\delta y_1).
\end{split}
\end{equation}
Here $\frac{d}{dy}J_{h^{-1}}(y)$ is the second order derivative of $h^{-1}$ evaluated at $y$ which is viewed as a bilinear function $\Cfrak\times\Cfrak\to\range\Gamma$. It is continuous in $y$ since the function (\ref{h_inverse}) is actually smooth (of class $C^{\infty}$).
Although this involves more tedious computations, one can also show that the second-order partial $f''_{22}(\psi,x)$ is continuous following the lines in Section \ref{sec:well-posed}. Therefore, for fixed $\delta y_1,\delta y_2$, the differential (\ref{g''_22_differential}) is continuous in $(\psi,y)$. Consider any convex subset $D_0\subset K$ with $y_1,y_2\in D_0$. By the mean value theorem we have
\begin{equation}
\begin{split}
 & \|g'_2(\psi,y_2)-g'_2(\psi,y_1)\| \\
 = & \left\|\int_{0}^{1}g''_{22}(\psi,y_1+\xi(y_2-y_1))d\xi(y_2-y_1)\right\| \\
 \leq & \max_{y\in K}\|g''_{22}(\psi,y)\|\|y_2-y_1\|.
\end{split}
\end{equation}
Now let us replace $\psi$ with $p(t)$. The local Lipschitz constant can be taken as
\begin{equation}
\gamma:=\max_{t\in U,\ y\in K}\|g''_{22}(p(t),y)\|.
\end{equation}

\end{proof}

Based on precedent lemmas, our main result in this section is stated as follows.

\begin{theorem}
	Algorithm $\ref{alg:predict-correct}$ returns a solution to $(\ref{family_eqns})$ for $t=1$ in a finite number of steps.
\end{theorem}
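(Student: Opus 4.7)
The plan is to apply the Kantorovich theorem at every corrector step, using the compact set $K$ from Lemma \ref{lem_compact_K} to extract uniform bounds, and then show that a single step length $\delta t > 0$ works throughout $U = [0,1]$, which immediately yields termination in at most $\lceil 1/\delta t\rceil$ iterations.

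More concretely, fix $t \in U$ and assume inductively that we have computed $y^{(k)} = y(t)$ on the solution path. For the corrector at $t + \delta t$ we apply Newton's method to
\[
F(y) := g(p(t+\delta t), y) - x_\Sigma,
\]
starting from the Euler predictor $z^{(k+1)} = y^{(k)} + v(t)\delta t$. I need to verify the three Kantorovich hypotheses uniformly in $t$. First, choose a convex open neighborhood $D_0$ of the solution path whose closure lies in the compact set $K\subset\Cscr_+$ of Lemma \ref{lem_compact_K}; by Lemma \ref{lem_Lip_cont}, $F'(y) = g'_2(p(t+\delta t), y)$ is Lipschitz in $y$ on $D_0$ with a constant $\gamma$ independent of $t$. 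Second, since $g'_2(p(t), y)$ is continuous and nonsingular on the compact set $U\times K$, the norm of its inverse admits a uniform upper bound $\beta$; a continuity/shrinking argument extends this bound to a small neighborhood of the solution path, so $\|F'(z^{(k+1)})^{-1}\| \leq \beta$ provided $\delta t$ is small enough that $z^{(k+1)}$ stays in $D_0$.

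The main obstacle, and the quantitative heart of the argument, is the bound on $\eta := \|F'(z^{(k+1)})^{-1} F(z^{(k+1)})\|$. Here I would exploit that $z^{(k+1)}$ is the first-order Taylor approximation of the exact solution $y(t+\delta t)$ to the IVP \eqref{IVP}. Since $y(\,\cdot\,)$ is $C^1$ and in fact $C^2$ (because $g$ is smooth enough by the arguments of Section \ref{sec:well-posed} and $h^{-1}$ is smooth), a standard Taylor expansion with remainder on compact $K\times U$ gives
\[
\|y(t+\delta t) - z^{(k+1)}\| \leq M_1 \,\delta t^2
\]
for some constant $M_1$ independent of $t$. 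Combining this with $F(y(t+\delta t)) = 0$ and the Lipschitz bound on $F'$, one obtains $\|F(z^{(k+1)})\| \leq M_2 \,\delta t^2$, hence $\eta \leq \beta M_2 \,\delta t^2$, so $\alpha = \beta\gamma\eta \leq \beta^2 \gamma M_2 \,\delta t^2$.

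Finally, choose $\delta t$ small enough that simultaneously (i) $\alpha \leq 1/2$, (ii) the Kantorovich radius $t^* = (\beta\gamma)^{-1}[1 - (1-2\alpha)^{1/2}]$ is smaller than the distance from the solution path to $\partial D_0$ so that $\overline{B}(z^{(k+1)}, t^*) \subset D_0$, and (iii) $z^{(k+1)}$ itself lies in $D_0$. All three conditions hold for $\delta t$ below a threshold depending only on $\beta, \gamma, M_1, M_2$ and the geometry of $K$, none of which depend on $t$. The Kantorovich theorem then guarantees that Newton's iterates converge to a zero of $F$ inside $\overline{B}(z^{(k+1)}, t^*)$. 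By Corollary \ref{cor_wellposed_C}, equation \eqref{family_eqns} has a unique solution at $t+\delta t$, namely $y(t+\delta t)$, so the corrector produces $y^{(k+1)} = y(t+\delta t)$. Induction advances $t$ by $\delta t$ each iteration, so after at most $\lceil 1/\delta t \rceil$ iterations the algorithm reaches $t=1$ and returns $y(1)$, the solution to \eqref{tau_of_C=Sigma}.
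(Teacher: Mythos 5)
Your proof follows the same skeleton as the paper's: work inside the compact neighborhood $K$ of Lemma \ref{lem_compact_K}, invoke the Kantorovich theorem at each corrector step, and extract a $t$-independent step size $\delta t$ from uniform bounds (Lipschitz constant $\gamma$ of $g'_2$, upper bound on $\|g'_2^{-1}\|$, containment of the Kantorovich ball). The one place where you genuinely diverge is the bound on $\eta$, which is the crux. The paper Taylor-expands $g(p(t+\delta t),\,y(t)+v(t)\delta t)$ \emph{algebraically} in the second argument, uses the exact linearity of $g$ in $\psi$ to split off the first-argument increment, and observes that the cross term $\delta t\,g'_2(p(t),y(t))v(t)$ cancels $\delta t\,g'_1(p(t),y(t);p'(t))$ by the very definition of $v(t)$, leaving a manifestly $O(\delta t^2)$ residual bounded by $\delta t^2(c_5 c_2+\tfrac{1}{2}c_2^2 c_4)$. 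You instead argue through the solution path: $z^{(k+1)}$ is the forward-Euler predictor, so $\|y(t+\delta t)-z^{(k+1)}\|=O(\delta t^2)$ because $y(\cdot)$ is $C^2$, and since $F(y(t+\delta t))=0$ you get $\|F(z^{(k+1)})\|=O(\delta t^2)$ by a mean-value bound (note: you only need a uniform bound on $\|g'_2\|$ over $K$ for this step, not the Lipschitz constant of $g'_2$ as you wrote). Both routes are sound and both ultimately rest on $g$ being twice continuously differentiable in $y$ with bounds uniform over $K\times U$; your version additionally needs the full $C^2$ regularity of the path $y(\cdot)$, which requires the mixed second partials of $g$, but these are harmless here because $g$ is \emph{linear} in $\psi$. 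The paper's computation is more self-contained and exploits the convex-homotopy structure explicitly; yours is shorter and more conceptual (``Euler has $O(\delta t^2)$ local truncation error''), and would transfer more readily to other homotopies or higher-order predictors. Your closing appeal to Corollary \ref{cor_wellposed_C} to identify the Newton limit with $y(t+\delta t)$ is a point the paper leaves implicit but is worth making explicit as you did.
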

\begin{proof}
	At each step, our task is to solve (\ref{family_eqns}) for $t+\delta t$ from the initial point $z(t+\delta t)=y(t)+v(t)\delta t$ given in (\ref{predict_Euler}). The idea is to work in the compact set $K$ introduced in Lemma \ref{lem_compact_K}. The boundary of $K$ is denoted by $\partial K$ which is also compact.
	
	First, we show that the predictor $z(t+\delta t)$ will always stay in $K$ as long as the step length $\delta t$ is sufficiently small. Define
	\begin{align}
		c_1 & :=\min_{t\in U}\,d(y(t),\partial K), \\
		c_2 & :=\max_{t\in U}\|v(t)\|,
	\end{align}
	where $d(x,A):=\min_{y\in A}d(x,y)$ is the distance function from a point $x$ to a set $A$.
	Note that $c_1>0$ because all the points $\{y(t)\}$ are in the interior of $K$.
	Then we see that the condition
	\begin{equation}\label{delta_t1}
	\delta t<\frac{c_1}{c_2}:=\delta t_1
	\end{equation}
	is sufficient since in this way
	\[\|v(t)\delta t\|\leq c_2\delta t<c_1\leq d(y(t),\partial K),\ \forall t\in U,\]
	which implies that $z(t+\delta t)\in K$.
	The reason is that one can always go from $y(t)$ in the direction of $v(t)$ until the boundary of $K$ is hit.

	Secondly, we want to apply the Kantorovich Theorem to ensure convergence of the corrector step, i.e., the Newton iterates. The function $\psi=p(t+\delta t)$ is held fixed in the corrector step. The uniform Lipschitz constant $\gamma$ has been given in Lemma \ref{lem_Lip_cont}, and there are two remaining points:
	\begin{enumerate}[(i)]
	\item We need to take care of the constraint $\alpha=\beta\gamma\eta\leq1/2$. Clearly, we can simply take $\beta=\|g'_{2}(\psi,y^{(0)}_\mathrm{in})^{-1}\|$ and
	\[\eta=\|g'_{2}(\psi,y^{(0)}_\mathrm{in})^{-1}(g(\psi,y^{(0)}_\mathrm{in})-x_\Sigma)\|,\]
	where $y^{(0)}_\mathrm{in}=z(t+\delta t)$ is the initialized inner-loop variable.
		Define 
		\begin{align}
		c_3 & :=\max_{y\in K,t\in U}\|g'_2(p(t),y)^{-1}\|, \\
		c_4 & :=\max_{y\in K,t\in U}\|g''_{22}(p(t),y)\|,
		\end{align}
		and obviously we have $\beta\leq c_3$, $\eta\leq c_3\|g(\psi,y^{(0)}_\mathrm{in})-x_\Sigma\|$. Hence a sufficient condition is
		\[\|g(\psi,y^{(0)}_\mathrm{in})-x_\Sigma\|\leq\frac{1}{2c_3^2\gamma},\]
		and we need an estimate of the left hand side. The Taylor expansion of $g$ in its second argument is
		\begin{equation}\label{g_Taylor}
		\begin{split}
		g(\psi,y(t)+v(t)\delta t)= & g(\psi,y(t))+\delta tg'_2(\psi,y(t))v(t) \\
		& +\frac{\delta t^2}{2}\Bcal[v(t),v(t)],
		\end{split}
		\end{equation}
		where $\Bcal$ is the bilinear function determined by the second order partials. Due to linearity and the identity $\psi=p(t+\delta t)=p(t)+\delta tp'(t)$, the first term
		\begin{equation}
		\begin{split}
			g(p(t+\delta t),y(t)) & =g(p(t),y(t))+\delta tg'_1(p(t),y(t);p'(t)) \\
			 & =x_\Sigma+\delta tg'_1(p(t),y(t);p'(t)).
		\end{split}
		\end{equation}
		The matrix in the second term\footnote{Attention: $g'_2(p'(t),y(t))$ is an abuse of notation because $p'(t)=\psi-\oneb$ may not be in the domain of the functional. It should be understood as substituting $\psi$ with $p'(t)$ in the expression of $g'_2(\psi,y(t))$.}
		\begin{equation}
		g'_2(p(t+\delta t),y(t))=g'_2(p(t),y(t))+\delta tg'_2(p'(t),y(t))
		\end{equation}
		Substituting these two expressions into (\ref{g_Taylor}), we obtain a cancellation due to the definition of $v(t)$ after (\ref{predict_Euler}) and we have
		\begin{equation}
		\begin{split}
			g(\psi,y(t)+v(t)\delta t)-x_\Sigma= & \delta t^2g'_2(p'(t),y(t))v(t) \\
			& +\frac{\delta t^2}{2}\Bcal[v(t),v(t)] \\
	    \end{split}
		\end{equation}
		whose norm is less than $\delta t^2(c_5c_2+\frac{1}{2}c_2^2c_4)$,
		where
		\[c_5:=\max_{y\in K}\|g'_2(p'(t),y)\|.\]
		We end up having the sufficient condition
		\[\delta t^2(c_5c_2+\frac{1}{2}c_2^2c_4)\leq\frac{1}{2c_3^2\gamma}\implies\delta t\leq\delta t_2.\]
	
	\item We need to insure that the closed ball $\overline{B}(y^{(0)}_\mathrm{in},t^*)$ is also contained in $K$. Clearly, we only need to make $t^*\leq\min_{t\in U}\,d(y(t)+v(t)\delta t_1/2,\partial K)=:r_2$, where $\delta t_1$ is the uniform step determined in (\ref{delta_t1}). This can be done since by its definition (\ref{t*}), $t^*$
		tends to $0$ when the step length $\delta t\to0$. A sufficient condition is
		\[1-\sqrt{1-2\alpha}\leq c_6\gamma r_2\Longleftrightarrow\alpha\leq\frac{1}{2}(1-(1-c_6\gamma r_2)^2),\]
		provided that $1-c_6\gamma r_2>0$, where
		\[c_6:=\min_{y\in K,t\in U}\|g'_2(p(t),y)^{-1}\|.\]
		With the bound for $\beta$ and $\eta$ in the previous point, a more sufficient condition is
		\[\delta t^2\gamma c_3^2(c_5c_2+\frac{1}{2}c_2^2c_4)\leq\frac{1}{2}(1-(1-c_6\gamma r_2)^2),\]
		which implies $\delta t\leq\delta t_3$ (constant).
	
	\end{enumerate}
	
	At last we can just take $\delta t:=\min\{\,\delta t_1/2,\delta t_2,\delta t_3\,\}$. In this way, the Kantorovich theorem is applicable to ensure local convergence in each inner loop. The reasoning above is independent of $t$ and hence the step length is uniform. This concludes the proof.	
\end{proof}

\subsection{Computation of the inverse Jacobian}
\label{subsec:comp-inv-Jac}

The coordinate thinking is suitable for theoretical reasoning. However, when implementing the algorithm, it is better to work with matrices directly. In this section, we present a matricial linear solver adapted from \cite{RFP-09} (see also \cite{FMP-12}). Here we shall assume $\psi$ is rational and admits a factorization $\psi=\sigma\sigma^*$ where $\sigma$ is outer rational and hence realizable. A crucial step in the implementation of the numerical algorithm is the computation of the Newton direction $g'_2(\psi,y)^{-1}g(\psi,y)$, which amounts to solving the linear equation in $V$ given $C$ and $\psi$
\begin{equation}\label{Newton_eqn}
g'_2(\psi,C;V)=g(\psi,C)
\end{equation}
where $g(\psi,C)$ has been given in \eqref{mmt_map_C} and
\begin{subequations}
\begin{align}
g'_2(\psi,C;V) & = -\int G\psi(G^*C^*CG)^{-1}G^*(V^*C \nonumber\\
 & \qquad \qquad +C^*V)G(G^*C^*CG)^{-1}G^* \label{Jac_C_V_bb} \\
 & = -\int G\psi(CG)^{-1}\left[(G^*C^*)^{-1}G^*V^*\right. \nonumber\\
 & \qquad \qquad \left.+VG(CG)^{-1}\right](G^*C^*)^{-1}G^* \label{Jac_C_V_cc}
\end{align}
\end{subequations}
The cancellation of one factor $CG$ from (\ref{Jac_C_V_bb}) to (\ref{Jac_C_V_cc}) is precisely why the condition number of the Jacobian $g'_2$ is smaller than that of $f'_2$ in (\ref{f_2partial}) when $C$ tends to the boundary of $\Cscr_+$, i.e., when $CG(e^{i\theta})$ tends to be singular for some $\theta$. This point is illustrated in the next example.


\begin{example}[Reduction of the condition number of the Jacobian under the $C$ parametrization]\label{ex_CondNum}

First we need to find a matrix representation of the Jacobian $g'_2(\psi,C)$ which is a linear operator from $\Cfrak$ to $\range\Gamma$. Fix the orthonormal bases of the two vector spaces as in (\ref{Lambda_C_coord}). Then the $(j,k)$ element of the \emph{real} $M\times M$ Jacobian matrix corresponding to $g'_2(\psi,C)$ is just
\begin{equation}\label{Jac_mat_g}
\langle \Lambdab_j,g'_2(\psi,C;\Cb_k) \rangle.
\end{equation}
Similarly, the matrix representation of the Jacobian (\ref{f_2partial}) is
\begin{equation}\label{Jac_mat_f}
\langle \Lambdab_j,f'_2(\psi,\Lambda;\Lambdab_k) \rangle,\quad j,k=1,\dots,M.
\end{equation}
Next let us fix $\psi\in C_+(\Tbb)$ and $C\in\Cscr_+$, evaluate explicitly the Jacobian matrix \eqref{Jac_mat_g}, and compute its condition number. The same computation is done for the Jacobian matrix (\ref{Jac_mat_f}) evaluated at $(\psi,h^{-1}(C))$ where $h^{-1}$ has been defined in \eqref{h_inverse}. Comparison is made in such a way because taking $\Lambda=h^{-1}(C)$ will lead to the same spectrum in the moment map $f$ as that in $g$ due to the spectral factorization (\ref{spec_fact_Lambda}).

Our example is about the problem of matrix covariance extension mentioned in Section \ref{sec:problem} with $(A,B)$ matrices given in (\ref{A_B_cov_ext}) and the filter $G(z)$ in (\ref{trans_func_covext}). We set the dimension $m=2$, the maximal covariance lag $p=1$, and we have $n=m(p+1)=4$.

Let us treat the problem for real processes. Then $\range\Gamma$ is the $M=7$-dimensional vector space of symmetric block-Toeplitz matrices of the form
\[\begin{bmatrix}
\Lambda_0 & \Lambda_1^\top\\
\Lambda_1 & \Lambda_0
\end{bmatrix},\]
where $\Lambda_0,\Lambda_1$ are $2 \times 2$ blocks. An orthonormal basis of $\range\Gamma$ can be determined from the matrix pairs
\begin{equation}\label{basis_Range_Gamma}
\begin{split}
(\Lambda_0,\Lambda_1)\in\{\zerob\}\times
\left\{\begin{bmatrix}
1&0\\0&0
\end{bmatrix},
\begin{bmatrix}
0&1\\0&0
\end{bmatrix},
\begin{bmatrix}
0&0\\1&0
\end{bmatrix},
\begin{bmatrix}
0&0\\0&1
\end{bmatrix}\right\} \\
\bigcup \left\{\begin{bmatrix}
1&0\\0&0
\end{bmatrix},
\begin{bmatrix}
0&1\\1&0
\end{bmatrix},
\begin{bmatrix}
0&0\\0&1
\end{bmatrix}\right\}
\times \{\zerob\}
\end{split}
\end{equation}
after normalization. Here the bold symbol $\zerob$ denotes the $2\times2$ zero matrix.

On the other hand, the vector space $\Cfrak$ contains matrices of the shape
\[\begin{bmatrix}
C_1&C_0
\end{bmatrix},\]
where $C_1,C_0$ are also $2\times2$ blocks and $C_0$ is lower triangular. An orthonormal basis of $\Cfrak$ can be determined from the standard basis of $\Rbb^{m\times n}$ which is made up of matrices $E_{jk}$ whose elements are all zero except that on $(j,k)$ position it is one. A basis of $\Cfrak$ is obtained by excluding those $E_{jk}$ which constitute the (strict) upper triangular part of $C_0$. Notice that given $C\in\Cfrak$, $zCG$ is a matrix polynomial of degree $-p$.

The prior is chosen as a positive Laurent polynomial $\psi(z)=b(z)b(z^{-1})$ where the polynomial $b(z)=1-z^{-1}+0.89z^{-2}$ has roots $0.5\pm0.8i$ with a modulus $0.9434$. We choose the parameter
\begin{equation}\label{C_1}
C=\bmat
0.5&0.65&1&0\\
-2.2615&-1&2&1
\emat,
\end{equation}
which belongs to the set $\Cscr_+$, because the roots of $\det zCG$ are $0.9\pm0.4i$ with a modulus $0.9849$.

Integrals such as (\ref{Jac_C_V_cc}) are approximated with Riemann sums
\[\int F(\theta)\approx\frac{\Delta\theta}{2\pi}\sum_k F(\theta_k),\]
where $\{\theta_k\}$ are equidistant points on the interval $(-\pi,\pi]$ and the subinterval length $\Delta\theta=10^{-4}$. The resulting condition number of \eqref{Jac_mat_g} is $2.4674\times10^{5}$ while that of (\ref{Jac_mat_f}) is $3.8187\times10^8$.
\end{example}

In order to invert the Jacobian $g'_2$ at a given ``point'' $(\psi,C)$ without doing numerical integration, we first need to fix an orthonormal basis $\{\Cb_1,\dots,\Cb_M\}$ of $\Cfrak$ such that $\Cb_1=C/\|C\|$.\footnote{This is always possible by adding $C$ into any set of basis matrices and performing Gram-Schmidt orthonormalization starting from $C$.} Then one can obtain a basis $\{\Vb_1,\dots,\Vb_M\}$ of $\Cfrak$ such that for $k=1,\dots,M$
\[G^*(z)(\Vb_k^*C+C^*\Vb_k)G(z)>0,\quad\forall z\in\Tbb\]
by setting $\Vb_k=\Cb_k+r_kC$ for some $r_k\geq0$.
The procedure for solving (\ref{Newton_eqn}) is described as follows:
\begin{enumerate}[1)]
\item Compute $Y=g(\psi,C)$ and $\Yb_k=g'_2(\psi,C;\Vb_k)$.
\item Find $\alpha_k$ such that $Y=\sum\alpha_k\Yb_k$.
\item Set $V=\sum \alpha_k\Vb_k$.
\end{enumerate}

In order to obtain the coordinates $\alpha_k$ in Step 2, one needs to solve a linear system of equations whose coefficient matrix consists of inner products $\langle \Yb_k,\Yb_j\rangle$. The matrix is invertible because $\{\Yb_k\}$ are linearly independent, which is a consequence of the Jacobian $g'_2(\psi,C)$ being nonsingular.

The difficult part is Step 1 where we need to compute the integrals $g(\psi,C)$ and $g'_2(\psi,C;\Vb_k)$. Since we want to avoid numerical integration, we shall need some techniques from spectral factorization. Evaluation of the former integral was essentially done in the proof of \cite[Theorem 11.4.3]{FPZ-10}. More precisely, we have the expression
\[G(zCG)^{-1}=(zI-\varPi)^{-1}B(CB)^{-1},\]
where $\varPi:=A-B(CB)^{-1}CA$ is the closed-loop matrix which is stable.
With a state-space realization $(A_1,B_1,C_1)$ of the stable proper transfer function $\sigma G(zCG)^{-1}$, one then solves for $R$ a discrete-time Lyapunov equation
\[R-A_1RA_1^*=B_1B_1^*.\]
Finally the integral $g(\psi,C)=C_1RC_1^*$.

The integral $g'_2(\psi,C;\Vb_k)$ can be computed similarly. The only difference is that we need to compute a left outer factor $W(z)$ of
\[Z^*(z)+Z(z)>0\text{ on }\Tbb\]
where 
\begin{equation}
\begin{split}
Z(z) & =zVG(zCG)^{-1} \\
& =Vz(zI-\varPi)^{-1}B(CB)^{-1} \\
 & = V\varPi(zI-\varPi)^{-1}B(CB)^{-1}+VB(CB)^{-1}\\
\end{split}
\end{equation}
The factorization involves solving a DARE for the unique stabilizing solution, in terms of which the factor can be expressed. Such a procedure is standard (cf.~the appendix for details). Once we have the factor $W(z)$, a realization of the transfer function $\sigma G(zCG)^{-1}W$ can be obtained, and we can just proceed in the same way as computing $g(\psi,C)$.

\begin{example}
Let us continue Example \ref{ex_CondNum} with the prior $\psi$ and the parameter $C$ given. We begin the simulation by computing the covariance matrix $\Sigma=g(\psi,C)$ with the formula given in \eqref{mmt_map_C}. Then the maximum entropy solution can be obtained with \eqref{C_0}, which is our initial value of Algorithm \ref{alg:predict-correct}. The step length is set as $\delta t=0.1$ which is quite large but sufficient for convergence in this particular example. Our target parameter $C^{(1)}$ is certainly equal to the given $C$ in \eqref{C_1}. The simulation result is shown in Figures \ref{fig:result1} and \ref{fig:result2}, where the coordinates of the solution parameter are plotted against the variable $t\in[0,1]$. One can see that the solution curves are smooth.

\begin{figure}[!t]
\centering
\includegraphics[width=0.5\textwidth]{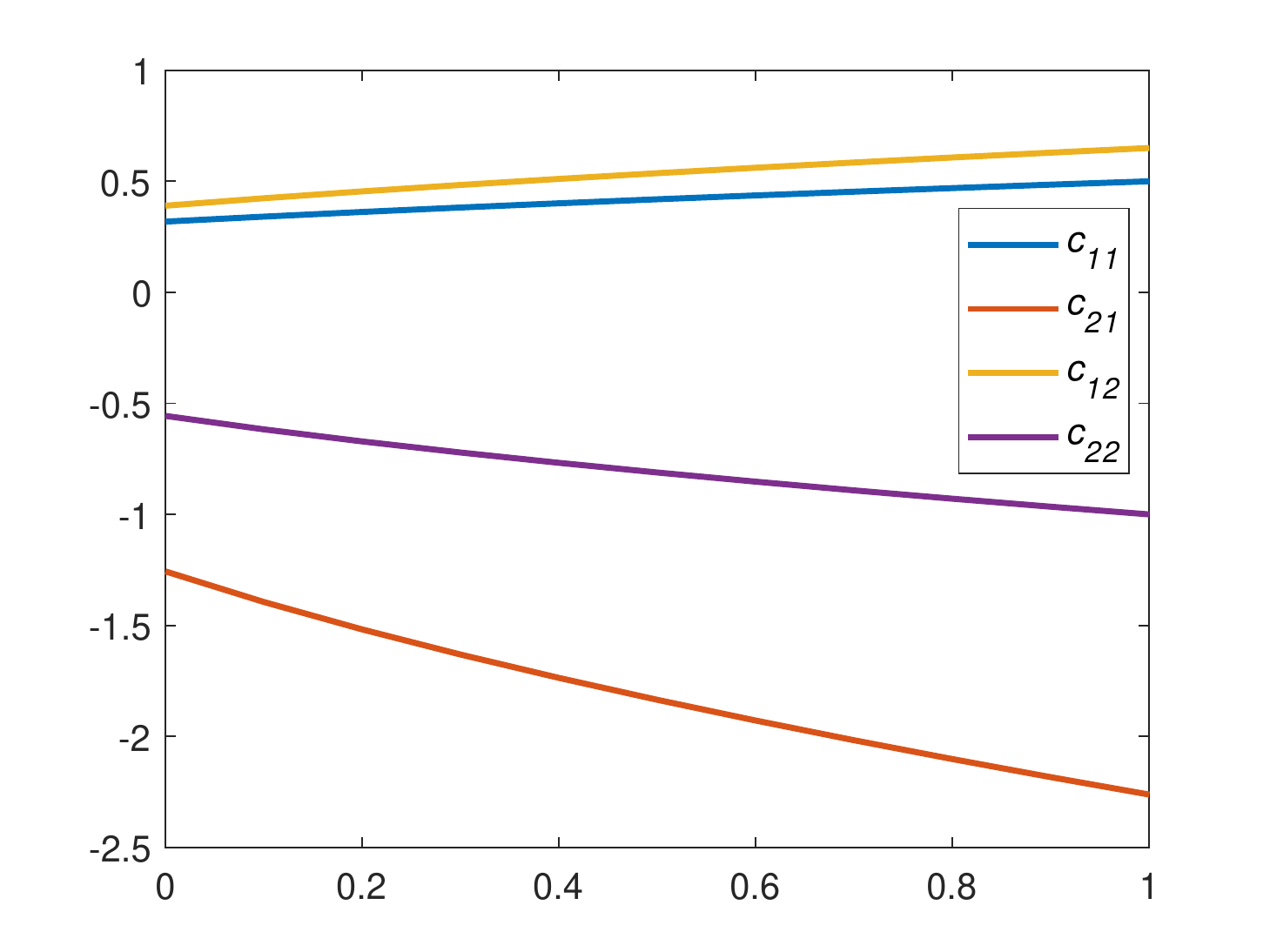}
\caption{Solution parameter (coordinates) against the variable $t$.}
\label{fig:result1}
\end{figure}

\begin{figure}[!t]
\centering
\includegraphics[width=0.5\textwidth]{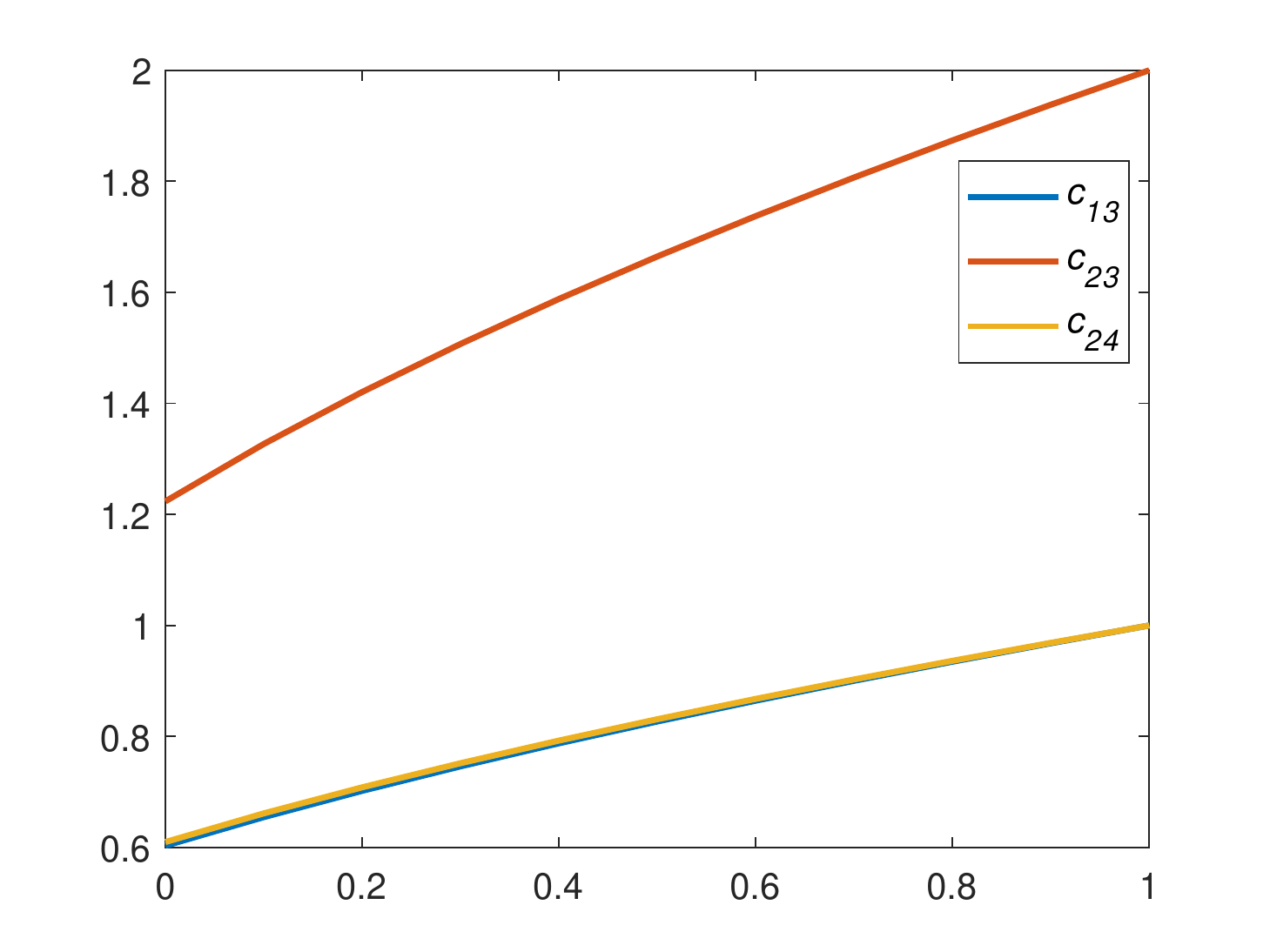}
\caption{Solution parameter (coordinates) against the variable $t$.}
\label{fig:result2}
\end{figure}

\end{example}

\section{Conclusions}
\label{sec:conclusions}

In this paper, we have addressed a spectral estimation problem formulated in a parametric fashion. The estimated state covariance matrix and the scalar prior function serve as data. When the prior is fixed, the problem is well-posed with respect to the covariance data according to \cite{zhu2017parametric}. Here we have shown continuous dependence of the solution parameter with respect to the prior function when the covariance matrix is held fixed. In this sense, we have completed the proof of well-posedness.

Moreover, we have indicated how to numerically solve the spectral estimation problem using a continuation method. Although the resulting algorithm seems more complicated than the optimization approach in \cite{avventi2011spectral}, it serves as a viable alternative with the benefit of obtaining a family of solutions parametrized by a real variable living on the unit interval that are consistent with the covariance data. While the problem is well-posed, in practice the Jacobian of the moment map may become ill-conditioned when the (solution) parameter goes near the boundary of the feasible set. Such a numerical issue can be alleviated if we carry out computations in the domain of spectral factors.

At last, we hope to generalize the results in this work to the open problem left in \cite{zhu2017parametric,Zhu-Baggio-17} when the prior function is matrix-valued. In that case, the candidate solution spectrum takes the form $(CG)^{-1}\Psi(CG)^{-*}$ with a matrix prior density $\Psi$ and some $C\in\Cscr_+$, in contrast to \eqref{family_spectra} where the prior $\psi$ is scalar-valued. One technical difficulty, namely uniqueness of the solution in that more general case remains to be tackled. One can expect that once well-posedness is established, the numerical continuation procedure to find the solution can be extended in a straightforward manner.

\appendix
\section*{From additive decomposition to spectral factorization}\label{sec:additive2fact}
Let $Z(z)=H(zI-F)^{-1}G+J$ with $F\in\Cbb^{n\times n}$ stable, $G\in\Cbb^{n\times m}$, $H\in\Cbb^{m\times n}$, and $J\in\Cbb^{m\times m}$. Suppose that $\Phi(z)=Z(z)+Z^*(z)>0$ for all $z\in\Tbb$. Set $R:=J+J^*>0$. Then one can write
\[\begin{split}
\Phi(z)=\bmat H(zI-F)^{-1} & I \emat\bmat0&G\\G^*&R\emat\bmat(z^{-1}I-F^*)^{-1}H^*\\I\emat,
\end{split}\]
which adding to the identity that holds for any Hermitian $P$
\[\begin{split}
0\equiv\bmat H(zI-F)^{-1}&I\emat\bmat FPF^*-P&FPH^*\\HPF^*&HPH^*\emat \\ \times\bmat(z^{-1}I-F^*)^{-1}H^*\\I\emat
\end{split}\]
yields
\[\begin{split}
\Phi(z)=\bmat H(zI-F)^{-1}&I\emat\bmat FPF^*-P&G+FPH^*\\G^*+HPF^*&R+HPH^*\emat \\ \times\bmat(z^{-1}I-F^*)^{-1}H^*\\I\emat.
\end{split}\]
Consequently, if $P$ is the unique stabilizing solution of the DARE
\[P=FPF^*-(G+FPH^*)(R+HPH^*)^{-1}(G^*+HPF^*)\]
such that $R+HPH^*>0$,
then one obtains the factorization
\begin{equation*}
\begin{split}
\bmat FPF^*-P&G+FPH^*\\G^*+HPF^*&R+HPH^*\emat=\bmat G+FPH^*\\R+HPH^*\emat \qquad\qquad\\
\times(R+HPH^*)^{-1}\bmat G^*+HPF^*&R+HPH^*\emat.
\end{split}
\end{equation*}
Taking $L$ as the Cholesky factor of $R+HPH^*(=LL^*)$, one gets a left outer factor of $\Phi(z)$ in this way
\begin{equation*}
\begin{split}
W(z) & =\begin{bmatrix}
H(zI-F)^{-1} & I
\end{bmatrix}
\begin{bmatrix}
G+FPH^*\\R+HPH^*
\end{bmatrix}
L^{-*} \\
 & =H(zI-F)^{-1}(G+FPH^*)L^{-*}+L.
\end{split}
\end{equation*}

\bibliographystyle{IEEEtran}
\bibliography{references}

\begin{thebibliography}{10}
\providecommand{\url}[1]{#1}
\csname url@samestyle\endcsname
\providecommand{\newblock}{\relax}
\providecommand{\bibinfo}[2]{#2}
\providecommand{\BIBentrySTDinterwordspacing}{\spaceskip=0pt\relax}
\providecommand{\BIBentryALTinterwordstretchfactor}{4}
\providecommand{\BIBentryALTinterwordspacing}{\spaceskip=\fontdimen2\font plus
\BIBentryALTinterwordstretchfactor\fontdimen3\font minus
  \fontdimen4\font\relax}
\providecommand{\BIBforeignlanguage}[2]{{%
\expandafter\ifx\csname l@#1\endcsname\relax
\typeout{** WARNING: IEEEtran.bst: No hyphenation pattern has been}%
\typeout{** loaded for the language `#1'. Using the pattern for}%
\typeout{** the default language instead.}%
\else
\language=\csname l@#1\endcsname
\fi
#2}}
\providecommand{\BIBdecl}{\relax}
\BIBdecl

\bibitem{BGL-THREE-00}
C.~I. Byrnes, T.~T. Georgiou, and A.~Lindquist, ``A new approach to spectral
  estimation: {A} tunable high-resolution spectral estimator,'' \emph{IEEE
  Trans. Signal Process.}, vol.~48, no.~11, pp. 3189--3205, 2000.

\bibitem{Georgiou-L-03}
T.~T. Georgiou and A.~Lindquist, ``{K}ullback--{L}eibler approximation of
  spectral density functions,'' \emph{IEEE Trans. Inform. Theory}, vol.~49,
  no.~11, pp. 2910--2917, 2003.

\bibitem{Georgiou-02}
T.~T. Georgiou, ``The structure of state covariances and its relation to the
  power spectrum of the input,'' \emph{IEEE Trans. Automat. Control}, vol.~47,
  no.~7, pp. 1056--1066, 2002.

\bibitem{georgiou2002spectral}
------, ``Spectral analysis based on the state covariance: the maximum entropy
  spectrum and linear fractional parametrization,'' \emph{IEEE Trans. Automat.
  Control}, vol.~47, no.~11, pp. 1811--1823, 2002.

\bibitem{Kalman}
R.~E. Kalman, ``Realization of covariance sequences,'' in \emph{Toeplitz
  Centennial}.\hskip 1em plus 0.5em minus 0.4em\relax Birkh\"{a}user, Basel,
  1982, pp. 331--342.

\bibitem{Georgiou-87}
T.~T. Georgiou, ``Realization of power spectra from partial covariance
  sequences,'' \emph{IEEE Trans. Acoust. Speech Signal Process.}, vol.~35,
  no.~4, pp. 438--449, 1987.

\bibitem{Gthesis}
------, ``Partial realization of covariance sequences,'' Ph.D. dissertation,
  Department of Electrical Engineering, University of Florida, Gainesville,
  1983.

\bibitem{BLL97}
C.~I. Byrnes, H.~J. Landau, and A.~Lindquist, ``On the well-posedness of the
  rational covariance extension problem,'' in \emph{Current and Future
  Directions in Applied Mathematics}.\hskip 1em plus 0.5em minus 0.4em\relax
  Springer, 1997, pp. 83--108.

\bibitem{BGL98}
C.~I. Byrnes, S.~V. Gusev, and A.~Lindquist, ``A convex optimization approach
  to the rational covariance extension problem,'' \emph{SIAM J. Control
  Optim.}, vol.~37, no.~1, pp. 211--229, 1998.

\bibitem{BLGM-95}
C.~I. Byrnes, A.~Lindquist, S.~V. Gusev, and A.~S. Matveev, ``A complete
  parameterization of all positive rational extensions of a covariance
  sequence,'' \emph{IEEE Trans. Automat. Control}, vol.~40, no.~11, pp.
  1841--1857, 1995.

\bibitem{SIGEST-01}
C.~I. Byrnes, S.~V. Gusev, and A.~Lindquist, ``From finite covariance windows
  to modeling filters: {A} convex optimization approach,'' \emph{SIAM Rev.},
  vol.~43, no.~4, pp. 645--675, 2001.

\bibitem{byrnes1997partial}
C.~I. Byrnes and A.~Lindquist, ``On the partial stochastic realization
  problem,'' \emph{IEEE Trans. Automat. Control}, vol.~42, no.~8, pp.
  1049--1070, 1997.

\bibitem{BGL-01}
C.~I. Byrnes, T.~T. Georgiou, and A.~Lindquist, ``A generalized entropy
  criterion for {N}evanlinna--{P}ick interpolation with degree constraint,''
  \emph{IEEE Trans. Automat. Control}, vol.~46, no.~6, pp. 822--839, 2001.

\bibitem{BLN-03}
A.~Blomqvist, A.~Lindquist, and R.~Nagamune, ``Matrix-valued
  {N}evanlinna--{P}ick interpolation with complexity constraint: An
  optimization approach,'' \emph{IEEE Trans. Automat. Control}, vol.~48,
  no.~12, pp. 2172--2190, 2003.

\bibitem{BTLM06}
C.~I. Byrnes, T.~T. Georgiou, A.~Lindquist, and A.~Megretski, ``Generalized
  interpolation in {$H^{\infty}$} with a complexity constraint,'' \emph{Trans.
  Amer. Math. Soc.}, vol. 358, no.~3, pp. 965--987, 2006.

\bibitem{Georgiou-87-NP}
T.~T. Georgiou, ``A topological approach to {N}evanlinna--{P}ick
  interpolation,'' \emph{SIAM J. Math. Anal.}, vol.~18, no.~5, pp. 1248--1260,
  1987.

\bibitem{georgiou1999interpolation}
------, ``The interpolation problem with a degree constraint,'' \emph{IEEE
  Trans. Automat. Control}, vol.~44, no.~3, pp. 631--635, 1999.

\bibitem{takyar2010analytic}
M.~S. Takyar and T.~T. Georgiou, ``Analytic interpolation with a degree
  constraint for matrix-valued functions,'' \emph{IEEE Trans. Automat.
  Control}, vol.~55, no.~5, pp. 1075--1088, 2010.

\bibitem{KreinNudelman}
M.~G. Kre\u{\i}n and A.~A. Nudel'man, \emph{The {M}arkov Moment Problem and
  Extremal Problems}, ser. Translations of Mathematical Monographs.\hskip 1em
  plus 0.5em minus 0.4em\relax Providence, Rhode Island: American Mathematical
  Society, 1977, vol.~50.

\bibitem{A65moments}
N.~I. Akhiezer, \emph{The Classical Moment Problem and Some Related Questions
  in Analysis}.\hskip 1em plus 0.5em minus 0.4em\relax Oliver \& Boyd,
  Edinburgh, 1965.

\bibitem{avventi2011spectral}
E.~Avventi, ``Fast, globally converging algorithms for spectral moments
  problems,'' in \emph{the Ph.D. thesis ``Spectral Moment Problems:
  Generalizations, Implementation and Tuning''}.\hskip 1em plus 0.5em minus
  0.4em\relax KTH Royal Institute of Technology, Stockholm, 2011, pp. 11--41.

\bibitem{PavonF-06}
M.~Pavon and A.~Ferrante, ``On the {G}eorgiou--{L}indquist approach to
  constrained {K}ullback--{L}eibler approximation of spectral densities,''
  \emph{IEEE Trans. Automat. Control}, vol.~51, no.~4, pp. 639--644, 2006.

\bibitem{FPR-07}
A.~Ferrante, M.~Pavon, and F.~Ramponi, ``Further results on the
  {B}yrnes--{G}eorgiou--{L}indquist generalized moment problem,'' in
  \emph{Modeling, Estimation and Control}.\hskip 1em plus 0.5em minus
  0.4em\relax Springer, Berlin, Heidelberg, 2007, pp. 73--83.

\bibitem{FRT-11}
A.~Ferrante, F.~Ramponi, and F.~Ticozzi, ``On the convergence of an efficient
  algorithm for {K}ullback--{L}eibler approximation of spectral densities,''
  \emph{IEEE Trans. Automat. Control}, vol.~56, no.~3, pp. 506--515, 2011.

\bibitem{baggio2018further}
G.~Baggio, ``Further results on the convergence of the {P}avon-{F}errante
  algorithm for spectral estimation,'' \emph{IEEE Trans. Automat. Control},
  2018.

\bibitem{Georgiou-06}
T.~T. Georgiou, ``Relative entropy and the multivariable multidimensional
  moment problem,'' \emph{IEEE Trans. Inform. Theory}, vol.~52, no.~3, pp.
  1052--1066, 2006.

\bibitem{FPR-08}
A.~Ferrante, M.~Pavon, and F.~Ramponi, ``Hellinger versus {K}ullback--{L}eibler
  multivariable spectrum approximation,'' \emph{IEEE Trans. Automat. Control},
  vol.~53, no.~4, pp. 954--967, 2008.

\bibitem{RFP-09}
F.~Ramponi, A.~Ferrante, and M.~Pavon, ``A globally convergent matricial
  algorithm for multivariate spectral estimation,'' \emph{IEEE Trans. Automat.
  Control}, vol.~54, no.~10, pp. 2376--2388, 2009.

\bibitem{RFP-10-well-posedness}
------, ``On the well-posedness of multivariate spectrum approximation and
  convergence of high-resolution spectral estimators,'' \emph{Systems Control
  Lett.}, vol.~59, no.~3, pp. 167--172, 2010.

\bibitem{FMP-12}
A.~Ferrante, C.~Masiero, and M.~Pavon, ``Time and spectral domain relative
  entropy: A new approach to multivariate spectral estimation,'' \emph{IEEE
  Trans. Automat. Control}, vol.~57, no.~10, pp. 2561--2575, 2012.

\bibitem{GL-17}
T.~T. Georgiou and A.~Lindquist, ``Likelihood analysis of power spectra and
  generalized moment problems,'' \emph{IEEE Trans. Automat. Control}, vol.~62,
  no.~9, pp. 4580--4592, 2017.

\bibitem{enqvist2008minimal}
P.~Enqvist and J.~Karlsson, ``Minimal {I}takura-{S}aito distance and covariance
  interpolation,'' in \emph{47th IEEE Conference on Decision and Control (CDC
  2008)}.\hskip 1em plus 0.5em minus 0.4em\relax IEEE, 2008, pp. 137--142.

\bibitem{Z14rat}
M.~Zorzi, ``Rational approximations of spectral densities based on the alpha
  divergence,'' \emph{Math. Control Signals Systems}, vol.~26, no.~2, pp.
  259--278, 2014.

\bibitem{Z14}
------, ``A new family of high-resolution multivariate spectral estimators,''
  \emph{IEEE Trans. Automat. Control}, vol.~59, no.~4, pp. 892--904, 2014.

\bibitem{Z15}
------, ``Multivariate spectral estimation based on the concept of optimal
  prediction,'' \emph{IEEE Trans. Automat. Control}, vol.~60, no.~6, pp.
  1647--1652, 2015.

\bibitem{byrnes2002identifiability}
C.~I. Byrnes, P.~Enqvist, and A.~Lindquist, ``Identifiability and
  well-posedness of shaping-filter parameterizations: A global analysis
  approach,'' \emph{SIAM J. Control Optim.}, vol.~41, no.~1, pp. 23--59, 2002.

\bibitem{zhu2017parametric}
B.~Zhu, ``On a parametric spectral estimation problem,''
  \emph{IFAC-PapersOnLine}, vol.~51, no.~15, pp. 826--831, 2018.

\bibitem{allgower1990continuation}
E.~L. Allgower and K.~Georg, \emph{Introduction to Numerical Continuation
  Methods}.\hskip 1em plus 0.5em minus 0.4em\relax SIAM, 1990.

\bibitem{enqvist2001homotopy}
P.~Enqvist, ``A homotopy approach to rational covariance extension with degree
  constraint,'' \emph{Int. J. Appl. Math. Comput. Sci.}, vol.~11, pp.
  1173--1201, 2001.

\bibitem{Zorzi-F-12}
M.~Zorzi and A.~Ferrante, ``On the estimation of structured covariance
  matrices,'' \emph{Automatica J. IFAC}, vol.~48, no.~9, pp. 2145--2151, 2012.

\bibitem{FPZ-12}
A.~Ferrante, M.~Pavon, and M.~Zorzi, ``A maximum entropy enhancement for a
  family of high-resolution spectral estimators,'' \emph{IEEE Trans. Automat.
  Control}, vol.~57, no.~2, pp. 318--329, 2012.

\bibitem{ning2013geometry}
L.~Ning, X.~Jiang, and T.~Georgiou, ``On the geometry of covariance matrices,''
  \emph{IEEE Signal Process. Lett.}, vol.~20, no.~8, pp. 787--790, 2013.

\bibitem{FPZ-10}
A.~Ferrante, M.~Pavon, and M.~Zorzi, ``Application of a global inverse function
  theorem of {B}yrnes and {L}indquist to a multivariable moment problem with
  complexity constraint,'' in \emph{Three Decades of Progress in Control
  Sciences}.\hskip 1em plus 0.5em minus 0.4em\relax Springer, Berlin,
  Heidelberg, 2010, pp. 153--167.

\bibitem{Grenander_Szego}
U.~Grenander and G.~Szeg{\"o}, \emph{Toeplitz Forms and Their Applications},
  ser. California Monographs in Mathematical Sciences.\hskip 1em plus 0.5em
  minus 0.4em\relax University of California Press, 1958.

\bibitem{lang1999fundamentals}
S.~Lang, \emph{Fundamentals of Differential Geometry}, ser. Graduate Texts in
  Mathematics.\hskip 1em plus 0.5em minus 0.4em\relax Springer-Verlag New York,
  Inc., 1999, vol. 191.

\bibitem{Zhu_PhDthesis}
B.~Zhu, ``Rational covariance extension, multivariate spectral estimation, and
  related moment problems: Further results and applications,'' Ph.D.
  dissertation, Department of Information Engineering, University of Padova,
  Padova, 2018.

\bibitem{georgiou2005solution}
T.~T. Georgiou, ``Solution of the general moment problem via a one-parameter
  imbedding,'' \emph{IEEE Trans. Automat. Control}, vol.~50, no.~6, pp.
  811--826, 2005.

\bibitem{ortega2000iterative}
J.~M. Ortega and W.~C. Rheinboldt, \emph{Iterative {S}olution of {N}onlinear
  {E}quations in {S}everal {V}ariables}, ser. SIAM's Classics in Applied
  Mathematics.\hskip 1em plus 0.5em minus 0.4em\relax SIAM, 2000, vol.~30.

\bibitem{Zhu-Baggio-17}
B.~Zhu and G.~Baggio, ``On the existence of a solution to a spectral estimation
  problem \emph{\`a la} {B}yrnes-{G}eorgiou-{L}indquist,'' \emph{IEEE Trans.
  Automat. Control}, vol.~64, no.~2, pp. 820--825, Feb. 2019.

\end{thebibliography}

\begin{IEEEbiography}[{\includegraphics[width=1in,height=1.25in,clip,keepaspectratio]{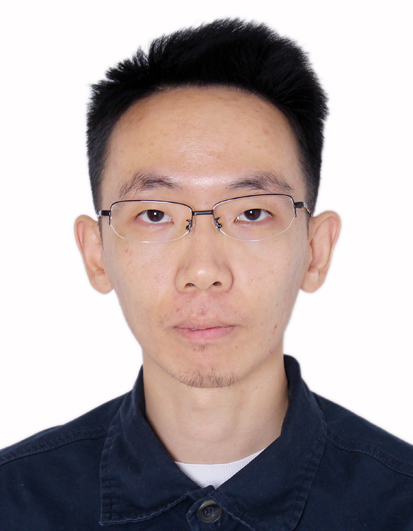}}]{Bin Zhu} was born in Changshu, Jiangsu Province, China in 1991. He received the B.Eng.~degree from Xi'an Jiaotong University, Xi'an, China in 2012 and the M.Eng.~degree from Shanghai Jiao Tong University, Shanghai, China in 2015, both in control science and engineering. In 2019, he obtained a Ph.D. degree in information engineering from University of Padova, Padova, Italy, and now he is a postdoctoral researcher in the same university.

His current research interest includes spectral estimation, rational covariance extension, and ARMA modeling.

\end{IEEEbiography}

\end{document}